\documentclass{amsart}
\usepackage{graphics}
\usepackage{latexsym}
\usepackage{amsfonts}
\usepackage{color}
\usepackage{enumerate}
\usepackage{graphicx}

\newtheorem{thm}{Theorem}[section]

\newtheorem{prop}[thm]{Proposition}
\newtheorem{lem}[thm]{Lemma}
\newtheorem{rem}[thm]{Remark}
\newtheorem{defn}[thm]{Definition}

\newcommand\specialref{}

\newenvironment{thmn}[1]
  {\renewcommand\specialref{#1}\thmx}
  {\endthmx}

\raggedbottom
\def\x{\underline x}
\def\y{\underline y}

\def\diam{\text{diam}}
\def\supp{\text{supp}}

\def\N{\mathbb {N}}

\def\B{\mathcal {B}}
\def\eps{\varepsilon}
\def\ie{{\em i.e.,\ }}
\def\eg{{\em e.g.\ }}
\def\TF{{\mathcal L}}
\def\FF{{\mathcal P}}
 \newcommand{\set}[1]{\left\{#1\right\}}

\numberwithin{equation}{section}

\author[H. Bruin]{Henk Bruin}
\author[P. Oprocha]{Piotr Oprocha}
\address[H. Bruin]{Faculty of Mathematics,
University of Vienna,
Oskar Morgensternplatz 1, A-1090 Vienna,
Austria} \email{henk.bruin@univie.ac.at}
\address[P. Oprocha]{Faculty of Applied
Mathematics, AGH University of Science and Technology, al.
Mickiewicza 30, 30-059 Krak\'ow, Poland} \email{oprocha@agh.edu.pl}

\title{On "observable" Li-Yorke tuples for interval maps}
\thanks{December 2 2013 -- compiled \today}
\begin{document}

\subjclass[2010]{Primary 37E05; Secondary 37A25, 37A40, 37B05.}

\keywords{Li-Yorke pair, proximal, limsup full, non-singular, Manneville-Pomeau map, Perron-Frobenius operator, first return map}
\maketitle

\begin{abstract}
In this paper we study the set of Li-Yorke $d$-tuples and its $d$-dimensional Lebesgue measure for interval maps
$T\colon [0,1] \to [0,1]$. If a topologically mixing $T$ preserves an absolutely continuous probability measure 9with respect to Lebesgue), then the $d$-tuples
have Lebesgue full measure, but if $T$ preserves an infinite absolutely continuous measure,
the situation becomes more interesting.
Taking the family of Manneville-Pomeau maps as example, we show that for any $d \ge 2$, it is possible
that the set of Li-Yorke $d$-tuples has full Lebesgue measure, but the set of Li-Yorke $d+1$-tuples
has zero Lebesgue measure.
\end{abstract}

\section{Introduction}

Abundance of Li-Yorke pairs (see \cite{LY} and Definition~\ref{def:liyorke} below) is a frequently used
criterion for declaring that a dynamical
system $T\colon X \to X$ on a compact metric space $(X,\rho)$ is chaotic.
A milestone was the result that positive topological entropy
implies the existence of an uncountable scrambled set, \ie a set in
which every pair of distinct points has the Li-Yorke property \cite{BGKM}.
However, $2^\infty$ maps (\ie with periodic points of period $2^k$ for each $k \ge 0$, but no other periods)
have zero entropy, but they can still have uncountable scrambled sets \cite{Smital},
so that Li-Yorke pairs give a slightly more refined view on mathematical chaos than
the condition that $h_{top}(T) > 0$.

Multimodal interval maps never have closed invariant scrambled sets,
and it can be proved that $C^3$ multimodal interval maps with
nonflat critical points only have scrambled sets of Lebesgue measure zero,
see  \cite{BJ}.
 In view of these results, when speaking about "observable chaos" it is more reasonable to consider the size of the
set of Li-Yorke pairs, than scrambled sets themselves.
This idea comes from Lasota, and was first employed by Pi\'orek in \cite{Piorek}.

Various papers (see \eg \cite{BHR, BJ}) comment on the measure-theoretic properties
of Li-Yorke pairs and scrambled sets. In particular, \cite{BJ}
gives a comprehensive account in the setting of smooth  multimodal
interval maps $T\colon [0,1] \to [0,1]$, on questions
whether Li-Yorke pairs have full two-dimensional Lebesgue measure
$\lambda_2 := \lambda \times \lambda$ in $[0,1]^2$.
Under certain mixing conditions of $\lambda$ (exactness suffices
but we use the weaker condition of $\limsup$ full, see Definition~\ref{DefLimsupFull})
this is indeed the case.
One question left open in \cite{BJ} is whether
there are smooth conservative maps for which
$\lambda_2$-a.e. pair $(x,y)$ is Li-Yorke and additionally its orbit under $T\times T$ is not dense in $[0,1]^2$.

The topic of this paper is the $d$-fold measure of Li-Yorke $d$-tuples.
We give the definitions first.
Let $(X,\rho)$ be a compact metric space and $T\colon X \to X$ a continuous map acting on it.
\begin{defn}\label{def:liyorke}
A $d$-tuple $\x = (x_1, \dots, x_d)$ is called:
\begin{enumerate}
\item {\em asymptotic} if
$\lim_n \max_{i, j} \rho(T^n(x_i), T^n (x_j)) = 0$;
\item {\em proximal} if
$\liminf_n \max_{i, j} \rho(T^n(x_i), T^n (x_j)) = 0$;
\item {\em $\delta$-separated} if
$\limsup_n \min_{i \neq j} \rho(T^n(x_i), T^n (x_j)) > \delta$;
if $\x$ is $\delta$-separated for some $\delta>0$, we call it \emph{separated};
\item {\em Li-Yorke (LY for short)} if it proximal and separated, that is:
$$
\begin{cases}
\liminf_n \max_{i, j} \rho(T^n(x_i), T^n (x_j)) = 0, \\[1mm]
\limsup_n \min_{i\neq j} \rho(T^n(x_i), T^n (x_j)) > 0.
\end{cases}
$$
\item {\em $\delta$-Li-Yorke (or simply, $\delta$-LY)} for some $\delta>0$, if $\x$ is $LY$ and:
$$
\limsup_n \min_{i\neq j} \rho(T^n(x_i), T^n (x_j)) > \delta.
$$
\end{enumerate}
We denote by $LY_d$ and $LY_d^\delta$ the set of all $LY$ and $\delta$-$LY$
$d$-tuples, respectively. Also we use $T_d$ as abbreviation
for the $d$-fold product map $T \times \dots \times T$ on $X^d$.
\end{defn}

It is known that a transitive system with a fixed point has a LY $d$-tuple for any $d\ge 2$ \cite{Xiong},
and consequently, every totally transitive system with dense periodic points (hence topologically weakly mixing)
must have such tuples.
It is also not hard to see that $T$ has LY $d$-tuples if and only if $T^n$ has them for every $n\ge 1$. Therefore,
each transitive map on the interval must have LY $d$-tuples. In fact, every
topologically mixing map on an infinite space
has a dense Mycielski set $M$  such that any $d\ge 2$ distinct points in
$M$ form a LY $d$-tuple (see \eg \cite{LO}).
On the other hand, maps of the interval with zero topological entropy never have LY $3$-tuples \cite{Li}.
However, there are dynamical systems on the
Cantor set such that each $d$-tuple
of distinct points is LY, but no uncountable set with this property for $(d+1)$-tuples exists \cite{LO}.
In fact, the system need not have LY $(d+1)$-tuples at all \cite{Do}.

The main motivation of the paper is the following question.\\
\begin{quote}
{\bf Question:} \emph{How large is the set of LY $d$-tuples?}\\
\end{quote}
Obviously, there is no one good answer to this question without specifying what "large" means.
In purely topological case it would be a residual set. But in many cases,
\eg on the interval, there is a natural reference measure such as Lebesgue
measure.
Even with this natural tool at hand, the answers depend on the degree of smoothness of the map.
The smoother the map is, the better the proposed method of measurement is appropriate.

Let $\lambda$ be Lebesgue measure, or more generally a non-singular Borel reference measure.
We will assume that $\lambda$ is fully supported, \ie $\lambda(U) > 0$ for every open $U \subset X$, or otherwise assume that $\lambda$ is
non-atomic and restrict $T$ to $\supp(\lambda)$.
Let $\lambda_d = \lambda \times \dots \times \lambda$ denote the $d$-fold
product measure on $X^d$.

The following definitions come from \cite{Bar} and \cite{PB82}
respectively:

\begin{defn}\label{DefLimsupFull}
Let $\lambda$ be a non-singular probability measure on $X$.
Then $\lambda$ is called:
\begin{enumerate}
\item {\em $\limsup$ full} if $\lambda(A) > 0$ implies
that $\limsup_{n\to\infty} \lambda(T^n(A)) = 1$;
\item {\em full}\footnote{Compared to $\limsup$ full, $\lim$ full seems a more
logical name for this property, but it is called
{\em full}
by Proppe \& Boyarski in \cite{PB82}, hence we follow this terminology.}
if $\lambda(A) > 0$ implies
that $\lim_{n\to\infty} \lambda(T^n(A)) = 1$.
\end{enumerate}
\end{defn}

When $d \ge 3$, then $\lambda$ being $\limsup$ full is no longer
sufficient to guarantee that $\lambda_d$-a.e.\ $d$-tuple is Li-Yorke.
Instead, if $\lambda$ admits an equivalent weak mixing $T$-invariant probability measure $\mu$, then this holds, see \cite{BHR} and
Lemma~\ref{lem:weak_mixing}.
We show
\begin{thmn}{A}\label{t2}
Let $\lambda$ be a non-singular, fully supported, Borel probability
measure, and denote by $\lambda_d$ the $d$-fold product measure.
\begin{enumerate}
\item\label{t2:c1} If $\lambda$ is $\limsup$ full then $\lambda_2(LY^\delta_2)=1$ for any $\delta<\diam(X)/2$,
\item\label{t2:c2} If $\lambda$ is full then $\lambda_d(LY_d^\delta)=1$ for
every $d \ge 2$ and some $\delta>0$. If $X$ is additionally connected then
$\lambda_d(LY_d^\delta)=1$ for every $\delta<\diam(X)/2(d-1)$.
\end{enumerate}
\end{thmn}

\begin{rem} Without the connectedness assumptions, a bound
$\delta<\diam(X)/2(d-1)$ cannot work. For instance, if $X$ is a
union of two
small intervals but placed at long distance, then $\diam (X)$ is large
but two points in any triple have to be very close.
\end{rem}

For a smooth topologicallly mixing interval map $T$ preserving a
probability measure $\mu \ll \lambda$,
the above theorem supplies an abundance of Li-Yorke $d$-tuples.
If the $T$-invariant measure $\mu$ is only $\sigma$-finite,
then the difficulty in showing the abundance of Li-Yorke
$d$-tuples for $d \ge 3$ lies in the separation along a subsequence.
Under mild conditions, any two points in a $d$-tuple separate
infinitely often, but it is difficult to show that three or more points
separate at the same time.
The family of \emph{Manneville-Pomeau maps} $T_\alpha:[0,1] \to [0,1]$
defined by
$$
T_\alpha(x) = \begin{cases}
x(1+2^\alpha x^\alpha) & \text{ if } x \in [0,\frac12), \\
2x-1 & \text{ if } x \in [\frac12,1]. \\
\end{cases}
$$

\begin{figure}[htb]
\begin{center}
\includegraphics[width=0.3\textwidth]{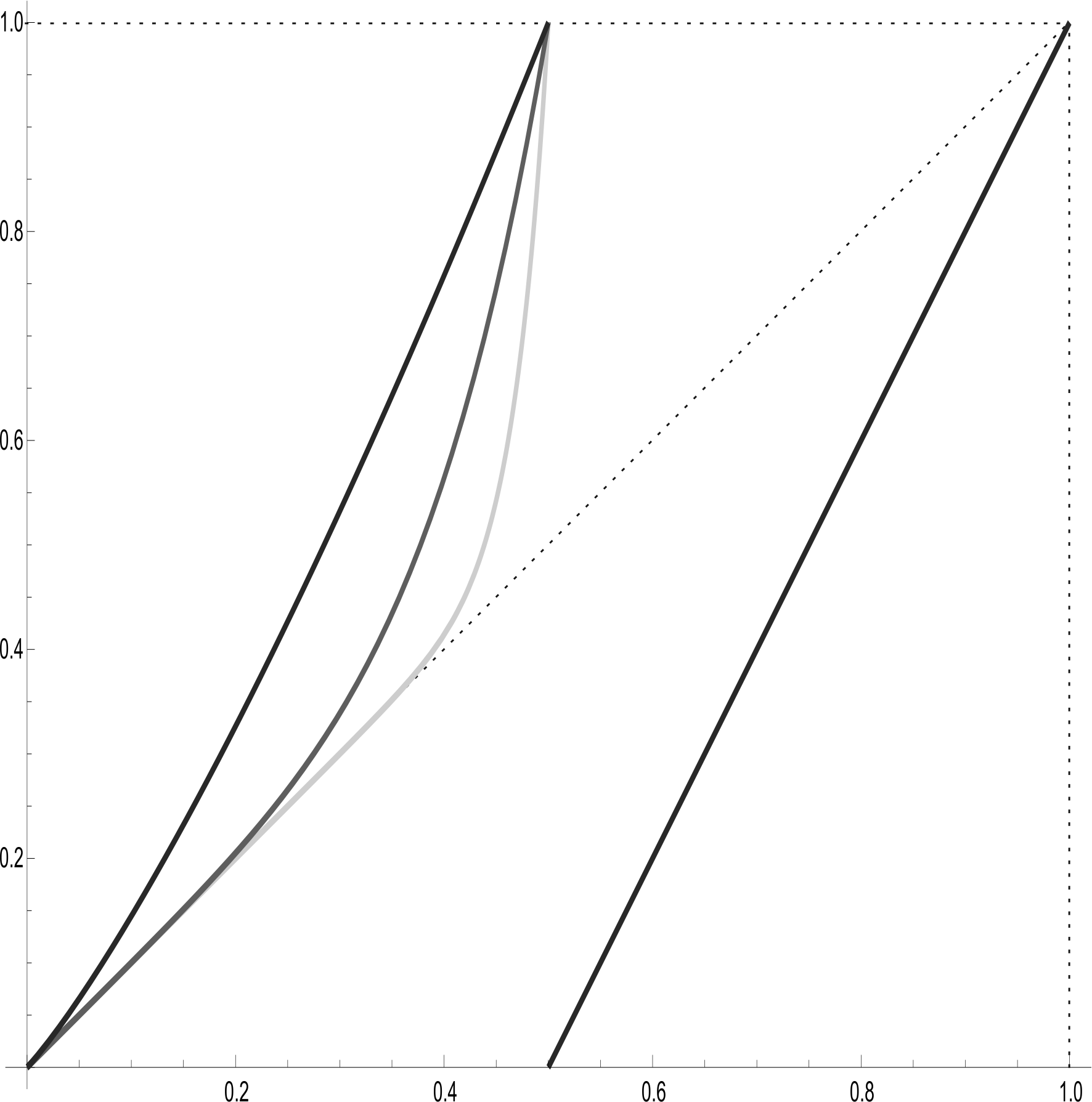}
\caption{Graph of $T_{\frac{1}{2}}$, $T_4$ and $T_{15}$.}
\label{fig:linear_chain}
\end{center}
\end{figure}

\noindent
illustrates this perfectly.

The point $0$ is a neutral fixed point, weakly expanding, but the speed
at which points move away from $0$ is slower as $\alpha$ increases.
The typical situation  can thus be that one point in a $d$-tuple
separates itself from the rest, while the other $d-1$ points
linger in a neighbourhood of $0$.
Using known techniques of renewal theory
to a first return induced map we prove the following theorem.

\begin{thmn}{B}
For the Manneville-Pomeau map $T_\alpha$, the following conditions hold:
\begin{enumerate}
\item $\lambda_2$-a.e.\ pair is $\frac13$-Li-Yorke,
\item and for $d \ge 3$:
\begin{enumerate}[(i)]
\item if $\alpha \le \frac{d-1}{d-2}$ and $\delta < 1/2(d-2)$, then $\lambda_d$-a.e.\ $d$-tuple is $\delta$-Li-Yorke;
\item if $\alpha > \frac{d-1}{d-2}$ and $\eps > 0$, then
$\lambda_d$-a.e.\ $d$-tuple is not $\eps$-separated;
in particular $\lambda_d$-a.e.\ $d$-tuple is not Li-Yorke.
\end{enumerate}
\end{enumerate}
\end{thmn}

\begin{rem} For $\alpha > 2$, the product system is $\lambda_2$-dissipative,
whence orbits of typical pairs are not dense in $[0,1]^2$, however still $\lambda_2$-a.e.\ pair is LY. This addresses a question posed in \cite[p.~527]{BJ}.

If $\frac{d}{d-1} < \alpha \le \frac{d-1}{d-2}$,
then $\lambda_d$-a.e.\ $\x$ is LY in $[0,1]^2$, however $\lambda_{d+1}$-a.e\ $\x$ is not LY,
nor asymptotic (\ie $T^n_{\alpha,d}(\x) \not\to \Delta$, the diagonal).
\end{rem}

When measuring tuples using Lebesgue measure as reference, much may depend on the class of maps we are considering.
In the class of continuous maps we can quite easily perturb the dynamics, and hence topological conjugacy can completely change
qualitative description of the map in terms of reference measure.
We can prove the following:

\begin{thmn}{C}
There exist pairwise topologically conjugate maps $P, S, T\colon [0,1] \to [0,1]$
and sets $K, L,M$ such that:
\begin{enumerate}
\item $K,L,M$ have full Lebesgue measure;
\item there are positive numbers $\delta_d$ such that every $d$-tuple of distinct points in $L$ (resp. in $M$)
is $\delta_d$-LY for $S$ (resp.\ $T$);
\item $\omega(x,S) = [0,1]$ for every $x \in L$;
\item\label{anota:c4} there exists a Cantor minimal set $A$ of positive measure such that $\omega(x,T)=A$ for every $x\in M$.
\item none of the pairs $(x,y)\in K\times K$ is LY for $P$; in particular the set of LY pairs  has zero Lebesgue measure.
\end{enumerate}
\end{thmn}

The maps $P,S,T$ are the same from dynamical point of view, however the size of the set of LY tuples detected by Lebesgue measure
is completely different in each case.

The paper is organised as follows. Section~\ref{sec:PrelimMeas} gives preliminary information from ergodic theory.
Section~\ref{sec:dfold}  shows the results on LY $d$-tuples that can be derived from the assumption
that $\lambda$ is full or $\limsup$ full.
In Section~\ref{sec:continuous}  we present and prove our results for the $C^0$ setting.
Finally, in Section~\ref{sec:MP}  we discuss the situation of $\sigma$-finite measure (Manneville-Pomeau)
which provides the most interesting examples where the theory of Section~\ref{sec:dfold} break down.


\section{Preliminaries from Measure Theory}\label{sec:PrelimMeas}
Let $X$ be a compact topological space with Borel $\sigma$-algebra $\B$ and let
$T\colon X\to X$ be a Borel measurable map.
We will only consider measures
$\lambda$ for which every $B \in \B$ is measurable, and
such that $\lambda(U) > 0$ for every open set,
\ie $\lambda$ is fully supported Borel measure.
In particular, this means that if
$\eps > 0$, then $g(\eps) := \inf_{x \in X} \lambda(B(x;\eps)) > 0$.
Indeed, if $g(\eps)= 0$, then, due to compactness of $X$, we can find a convergent sequence $x_n \to x$ such that $\lambda(B(x_n;\eps)) \to 0$.
But then $B(x;\eps/2) \subset B(x_n;\eps)$ for $n$ sufficiently large,
so $\lambda(B(x; \eps/2)) \le \lim_n \lambda(B(x_n;\eps)) = 0$, in contradiction
to $\supp(\lambda) = X$.

\begin{defn}\label{def:exact}
A measure $\lambda$ on $\B$ is (with respect to $T$):
\begin{enumerate}
\item \emph{non-singular} provided that $\lambda(A)=0$ if $\lambda(T^{-1}(A))=0$,
\item \emph{conservative} if for every set $\lambda(A) > 0$ there is $n>0$ such that $\lambda(A\cap T^n(A))>0$,
\item \emph{ergodic}\footnote{
If Borel sets $A,B$ are such that $\lambda(A\setminus B)=0$ then we write $A\subset B \pmod \lambda$. Similarly, when $A\subset B \pmod \lambda$ and $B\subset A \pmod \lambda$ then we denote this fact by $A= B \pmod \lambda$.}
 if $A=T^{-1}(A) \pmod \lambda$
 implies that $\lambda(A)=0$ or $\lambda(A^c)=0$,
\item \emph{exact} if the {\em tail-field} $\bigvee_n T^{-n} \B$ is trivial,
or equivalently $\lambda(A)$ or $\lambda(A^c)  = 0$ whenever $T^{-n} \circ T^n(A) = A \pmod \lambda$ for all $n \in \N$.
\end{enumerate}
\end{defn}

\begin{rem}\label{rem:conservative}
If $\lambda$ is conservative, then $\lambda(A\cap T^n(A))>0$ for some $n>0$ and so there is $m>0$ such that
$$
\lambda(A\cap T^{n+m}(A))\ge \lambda(A\cap T^n(A)\cap T^m(A\cap T^n(A)))>0.
$$
In other words (by induction), for every $n>0$ there exists $k>n$ such that $\lambda(A\cap T^k(A))>0$, provided that $\lambda(A)>0$,
and hence it follows that $\lambda$-a.e.\ $x \in A$ returns to $A$ infinitely often.
\end{rem}

With any non-singular measure $\lambda$ w.r.t.\ $T$ we can associate the Perron-Frobenius operator $\TF \colon L^1(\lambda)\to L^1(\lambda)$,
uniquely defined by the formula
$$
\int_A \TF f d\lambda = \int_{T^{-1}(A)}f d\lambda \quad \text{ for all }f\in L^1(\lambda) \text{ and all }A\in \B.
$$

\begin{rem}\label{rem:exact}
An equivalent property to exactness (assuming that $\lambda$ is non-singular)
is that $\int |\TF^nf| d\lambda \to 0$ as $n \to \infty$
for any $f \in L^1(\lambda)$ with $\int f d\mu = 0$,
where $\TF$ is the Perron-Frobenius operator (see \cite{Lin} or \cite[Theorem 1.3.3.]{Aar}).
\end{rem}

\begin{lem}\label{L_Erg}
If $\lambda$ is a non-singular, $\sigma$-finite and exact Borel measure, then $d$-dimensional direct product measure $\lambda_d$
is ergodic for every integer $d\ge1$.
\end{lem}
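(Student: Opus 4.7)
I aim to prove the stronger statement that $T_d$ is itself exact with respect to $\lambda_d$; ergodicity then follows directly from Definition~\ref{def:exact}, since any $T_d$-invariant $A$ satisfies $T_d^{-n}T_d^n A=A \pmod{\lambda_d}$ for all $n$. The working criterion is the Perron-Frobenius characterization from Remark~\ref{rem:exact}: exactness of $T_d$ is equivalent to $\|\TF_d^n F\|_{L^1(\lambda_d)}\to 0$ for every $F\in L^1(\lambda_d)$ with $\int F\,d\lambda_d=0$, where $\TF_d$ is the Perron-Frobenius operator of $T_d$. I argue by induction on $d$; the base case $d=1$ is exactly the hypothesis.

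For the inductive step from $d-1$ to $d$, split $X^d=X\times X^{d-1}$. A direct check from the defining integral identity yields the factorization $\TF_d(f\otimes g)=(\TF f)\otimes(\TF_{d-1}g)$ on product functions, hence $\TF_d^n=\TF^n\otimes\TF_{d-1}^n$ on such products. Because $\TF_d^n$ is an $L^1$-contraction, it suffices to verify decay on a norm-dense subspace of $\{F\in L^1(\lambda_d):\int F\,d\lambda_d=0\}$. Finite linear combinations $F=\sum_{k=1}^K c_k f_k\otimes g_k$ with $f_k\in L^1(\lambda)$, $g_k\in L^1(\lambda_{d-1})$ are dense in $L^1(\lambda_d)$, and $\sigma$-finiteness lets me fix once and for all $h\in L^1(\lambda)$ and $h_{d-1}\in L^1(\lambda_{d-1})$ (normalized indicators of finite, positive measure sets) with $\int h=\int h_{d-1}=1$; then subtracting an appropriate scalar multiple of $h\otimes h_{d-1}$ enforces $\int F\,d\lambda_d=0$ without spoiling the approximation.

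The key calculation is the decomposition. Writing $f_k=\tilde f_k+\alpha_k h$ with $\alpha_k=\int f_k\,d\lambda$ and $\int\tilde f_k=0$, and analogously $g_k=\tilde g_k+\beta_k h_{d-1}$, I expand and collect terms:
\[
F=\sum_{k}c_k\,\tilde f_k\otimes\tilde g_k+G_1\otimes h_{d-1}+h\otimes G_2+\Big(\sum_k c_k\alpha_k\beta_k\Big)h\otimes h_{d-1},
\]
where $G_1=\sum_k c_k\beta_k\tilde f_k$ and $G_2=\sum_k c_k\alpha_k\tilde g_k$ both have zero integral. Crucially, $\sum_k c_k\alpha_k\beta_k=\int F\,d\lambda_d=0$, so the non-decaying component $h\otimes h_{d-1}$ drops out. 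Applying $\TF_d^n=\TF^n\otimes\TF_{d-1}^n$ and the triangle inequality, each surviving tensor term is bounded by a product of two $L^1$-norms in which at least one factor involves $\TF^n$ or $\TF_{d-1}^n$ applied to a function of zero integral; by exactness of $T$ (base case) and of $T_{d-1}$ (inductive hypothesis) such factors tend to $0$, while the companion factor stays bounded by the $L^1$-contraction property. Since there are only finitely many terms, $\|\TF_d^n F\|_{L^1}\to 0$.

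\textbf{Main obstacle.} The principal subtlety is that in the $\sigma$-finite setting the constants need not lie in $L^1(\lambda)$, so one cannot center a function by subtracting its mean. The finite-measure indicators $h$ and $h_{d-1}$ are substitutes for this missing constant, and the decomposition above is engineered precisely so that the only tensor piece immune to Perron-Frobenius decay, namely $h\otimes h_{d-1}$, carries coefficient $\int F\,d\lambda_d$, which is $0$ by hypothesis.
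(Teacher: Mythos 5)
Your argument is correct, and it is essentially the route the paper has in mind: the paper's "proof" is only the citation "parallel to Lemma 18 of [BJ]", and that lemma is proved exactly by the mechanism you use — the $L^1$-characterization of exactness via the Perron--Frobenius operator (Remark~\ref{rem:exact}), the tensor factorization $\TF_d^n=\TF^n\otimes\TF_{d-1}^n$, and a density/contraction argument on zero-mean functions, which you extend from pairs to $d$-tuples by induction. Your handling of the $\sigma$-finite centering issue (replacing constants by normalized finite-measure indicators so that the non-decaying term carries coefficient $\int F\,d\lambda_d=0$) is precisely the point that makes the argument work in the infinite-measure setting, so nothing essential is missing.
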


\begin{proof}
Parallel to Lemma 18 of \cite{BJ}.
\end{proof}

We repeat the following fact after Thaler \cite{thaler}.
\begin{lem}\label{fpconv0}
Let $\lambda$ be an exact non-singular Borel measure
and $\mu \ll \lambda$ an infinite $\sigma$-finite
$T$-invariant measure. Then
$$
\int_A \TF^n f d\lambda \to 0 \qquad \text{ as } n \to \infty,
$$
for all $A\in \B$, $\mu(A)<\infty$ and all $f\in L^1(\lambda)$.
\end{lem}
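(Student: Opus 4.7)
The plan is to reduce to the exactness characterisation recorded in Remark~\ref{rem:exact}: for $g\in L^1(\lambda)$ with $\int g\,d\lambda=0$, one has $\|\TF^n g\|_{L^1(\lambda)}\to 0$. The obstacle is that an arbitrary $f\in L^1(\lambda)$ need not be mean-zero, and in the infinite-measure setting the natural $\TF$-fixed function $h:=d\mu/d\lambda$ does \emph{not} lie in $L^1(\lambda)$, so one cannot subtract off a multiple of $h$ directly. My key idea is to subtract instead a multiple of a \emph{truncation} of $h$ to a set $B$ of large but finite $\mu$-measure---possible because $\mu$ is $\sigma$-finite and infinite---and to observe that the corresponding coefficient can be made arbitrarily small by taking $\mu(B)$ large.

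Concretely, given $\varepsilon>0$, I would choose $B\in\B$ with $\mu(A)\le\mu(B)<\infty$ and $\mu(B)>\varepsilon^{-1}\mu(A)\|f\|_{L^1(\lambda)}$. Setting $g:=h\mathbf{1}_B$ (so that $g\in L^1(\lambda)$ with $\int g\,d\lambda=\mu(B)$) and $c:=\int f\,d\lambda/\mu(B)$, the function $f-cg$ has zero $\lambda$-integral, so Remark~\ref{rem:exact} yields $\|\TF^n(f-cg)\|_{L^1(\lambda)}\to 0$. For the $g$-piece, the defining duality of $\TF$ together with $T$-invariance of $\mu$ gives
$$
\int_A \TF^n g\, d\lambda \;=\; \int_{T^{-n}(A)\cap B} h\, d\lambda \;=\; \mu\bigl(T^{-n}(A)\cap B\bigr)\;\le\;\mu(A).
$$

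Putting the pieces together,
$$
\Bigl|\int_A \TF^n f\, d\lambda\Bigr| \;\le\; |c|\,\mu(A) \;+\; \|\TF^n(f-cg)\|_{L^1(\lambda)},
$$
so $\limsup_n \bigl|\int_A \TF^n f\, d\lambda\bigr| \le |c|\mu(A) < \varepsilon$ by our choice of $B$, and sending $\varepsilon\to 0$ finishes the proof. The only delicate step is producing a mean-zero approximant to $f$ when the invariant density $h$ is not integrable; once the truncation trick is in place, the argument reduces to routine exactness plus $T$-invariance of $\mu$.
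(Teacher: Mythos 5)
Your proposal is correct and is essentially the paper's own argument: both subtract a suitable multiple of the truncated density $h\mathbf{1}_B$ with $0<\mu(B)<\infty$, apply the exactness characterisation of Remark~\ref{rem:exact} to the resulting mean-zero function, and bound the remaining term by $\mu(T^{-n}(A)\cap B)\le\mu(A)$ via $T$-invariance, letting $\mu(B)\to\infty$ (which your explicit $\varepsilon$-choice of $B$ accomplishes) to conclude.
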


\begin{proof}
Fix $B\in \B$, $0<\mu(B)<\infty$ and denote
$$
g=\frac{\int_X f d\lambda}{\mu(B)}1_B \; h
$$
where $h=\frac{d\mu}{d\lambda}$ and observe that $\int_X (f-g)d\lambda=0$. Then
\begin{eqnarray*}
\int_A \TF^nf d\lambda &=& \int_A \TF^n(f-g) d\lambda+ \int_{T^{-n}(A)} g d\lambda\\
&= &\int_A \TF^n(f-g) d\lambda+\frac{\int_X f d\lambda}{\mu(B)}\mu(T^{-n}(A)\cap B)
\end{eqnarray*}
Since $\lambda$ is exact (see Remark~\ref{rem:exact}),
the first term tends to $0$.
By invariance of $\mu$ we obtain $\mu(T^{-n}(A)\cap B)\le \mu(T^{-n}(A))=\mu(A)$ and hence
$$
\limsup_{n\to \infty}\int_A \TF^nf d\lambda \le \frac{\mu(A)}{\mu(B)} \int_X f d\lambda .
$$
But $\mu$ is infinite and $\sigma$-finite, so we can start with arbitrarily large $\mu(B)$ which completes the proof.
\end{proof}

\begin{lem}\label{lem:exactnotfull}
Let $\lambda$ be an exact non-singular Borel measure
and $\mu \ll \lambda$ an infinite $\sigma$-finite
$T$-invariant measure.
Then $\lambda$ is not full.
\end{lem}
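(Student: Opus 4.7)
The plan is by contradiction. Suppose $\lambda$ were full, write $h = d\mu/d\lambda$, and choose (by $\sigma$-finiteness and $\mu(X) = \infty$) a threshold $M$ large enough that $A := \{h \le M\}$ satisfies $\mu(A) \le M < \infty$ and $0 < \lambda(A) < 1$ (the upper bound being forced by $\mu$ infinite: if $\lambda(A) = 1$ then $\mu(X) \le M$).

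With this $A$, two competing asymptotics arise. On the one hand, Lemma~\ref{fpconv0} applied with $f = 1_A \in L^1(\lambda)$ and the set $A$ itself yields
\[
\lambda\bigl(A \cap T^{-n}(A)\bigr) = \int_A \TF^n 1_A \, d\lambda \longrightarrow 0.
\]
On the other, fullness applied to $A$ gives $\lambda(T^n A) \to 1$ and hence $\lambda(A \cap T^n A) \to \lambda(A) > 0$. These combine through the elementary set identity $A \cap T^n(A) = T^n\bigl(A \cap T^{-n}(A)\bigr)$: writing $E_n := A \cap T^{-n}(A)$, one obtains $\lambda(E_n) \to 0$ yet $\lambda(T^n E_n) \to \lambda(A) > 0$. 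Since $T^n(E_n) \subseteq A$ and $h \le M$ there, dominated convergence (against $h \cdot 1_A \in L^1(\lambda)$) upgrades this to $\mu(T^n E_n) \to \mu(A) > 0$, whereas $\mu(E_n) \le M\lambda(E_n) \to 0$.

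I anticipate the principal obstacle to lie in closing the argument. The disparity $\mu(E_n) \to 0$ versus $\mu(T^n E_n) \to \mu(A) > 0$ is not by itself contradictory, because non-invertibility of $T$ permits $\mu$-mass creation under forward iteration: by $T$-invariance $\mu(T^{-n}(T^n E_n)) = \mu(T^n E_n) \to \mu(A)$, with the ``extra'' preimages $T^{-n}(T^n E_n) \setminus E_n$ sitting in $A^c \cap T^{-n}(A)$, a set of vanishing $\lambda$-measure where $h$ is free to be arbitrarily large. I plan to resolve this by appealing to the Proppe--Boyarski characterisation~\cite{PB82}: $\lambda$ full forces the existence of a \emph{finite} $T$-invariant measure absolutely continuous with respect to $\lambda$, and exactness of $\lambda$ --- which makes the cone of nonnegative $\TF$-invariant densities one-dimensional --- then forces this finite measure to be a scalar multiple of $\mu$, contradicting $\mu(X) = \infty$.
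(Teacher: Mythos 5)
Your preparatory computations are correct (the choice of $A=\{h\le M\}$, the application of Lemma~\ref{fpconv0} with $f=1_A$ to get $\lambda(A\cap T^{-n}(A))\to 0$, and the fullness consequence $\lambda(A\cap T^n(A))\to\lambda(A)>0$), and you are right that they do not contradict each other, since $T$ may create $\lambda$-mass forward and the Radon--Nikodym density $h$ is unbounded off $A$. The problem is that your proposed way of closing the argument is not a proof but an appeal to a statement that is essentially stronger than the lemma itself: ``$\lambda$ full $\Rightarrow$ there exists a finite $T$-invariant measure $\ll\lambda$'' is not available in the generality of this lemma (an arbitrary measurable $T$ with a non-singular exact Borel probability $\lambda$). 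The reference \cite{PB82} treats surjective piecewise monotone interval maps where absolutely continuous invariant probabilities are part of the setting, and the analogous equivalence quoted in this paper (Theorem~\ref{thm:limfull}, from \cite{BJ}) is proved only for topologically mixing $C^3$ non-flat interval maps with $\lambda$ Lebesgue --- and its proof rests on statements of the present type rather than the other way around. Your second step (exactness forces the cone of $\TF$-invariant densities to be one-dimensional, so the hypothetical finite measure is a multiple of $\mu$) also needs justification: uniqueness up to scalar of $\sigma$-finite invariant measures absolutely continuous w.r.t.\ $\lambda$ is usually derived from conservativity plus ergodicity, and conservativity is not among the hypotheses (nor does exactness of a non-singular measure supply it for free); moreover $\mu$ is only assumed $\ll\lambda$, not equivalent. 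So as written the proposal has a genuine gap exactly at the point you flagged.

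The missing idea is elementary and goes through \emph{preimages} of $A$ and fullness applied to a single \emph{fixed} set, rather than through forward images of $A$. Since $\mu$ is $\sigma$-finite and $\lambda$ is a probability, write $X=\bigcup_i F_i$ with $\mu(F_i)<\infty$, choose $r$ with $\sum_{i>r}\lambda(F_i)<2^{-k-1}$, and apply Lemma~\ref{fpconv0} with $f=1_{F_i}$ (and the fixed set $A$ of finite $\mu$-measure) to get $\lambda(T^{-n}(A)\cap F_i)=\int_A\TF^n 1_{F_i}\,d\lambda\to 0$; this produces a subsequence $(n_k)$ with $\lambda(T^{-n_k}(A))<2^{-k}$. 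Setting $B=\bigcup_k T^{-n_k}(A)$ one has $\lambda(B)<1$, hence $\lambda(B^c)>0$, while $T^{n_k}(B^c)\subset T^{n_k}(T^{-n_k}(A^c))\subset A^c$, so $\lambda(T^{n_k}(B^c))\le 1-\lambda(A)<1$ for all $k$; applying the definition of fullness to the fixed set $B^c$ gives the contradiction directly, with no auxiliary existence or uniqueness theorem needed.
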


\begin{proof}
Since $\mu$ is $\sigma$-finite, there is $A\in \B$ with $0<\mu(A)<\infty$, in particular
$\lambda(A)>0$. While $\mu(X) = \infty$, there are countably many pairwise disjoint sets $F_i$ such that $\mu(F_i)<\infty$ and
$\bigcup_{i=1}^\infty F_i=X$.
Fix any $k>0$. Since $1=\lambda(X)=\sum_{i=1}^\infty \lambda(F_i)$ there is $r>0$
such that $\sum_{i=r+1}^\infty \lambda(F_i)< 2^{-k-1}$.
Hence, for every $n>0$ we have
$$
\lambda(T^{-n}(A))=\sum_{i=1}^\infty \lambda(T^{-n}(A)\cap F_i) \le 2^{-k-1}+\sum_{i=1}^r \lambda(T^{-n}(A)\cap F_i).
$$
But, for any $i$ we obtain by Lemma~\ref{fpconv0} that
$$
\lambda(T^{-n}(A)\cap F_i)=\int_{T^{-n}(A)} 1_{F_i}d\lambda = \int_A \TF^n(1_{F_i})d\lambda \to 0.
$$
Hence, there is $n_k>0$ such that $\lambda(T^{-{n_k}}(A)\cap F_i) < 2^{-k-1}/r$ for each $1\le i \le r$ and consequently
$\lambda(T^{-{n_k}}(A))<2^{-k}$.
In particular, if we set $B := \cup_{k \ge 1} T^{-n_k}(A)$ then $0<\lambda(B) <1$.
The proof is finished by the fact, that $\lambda(B^c) > 0$, but
$T^{n_k}(B^c) \cap A = \emptyset$ for all $k \ge 1$, which shows that
$\lambda$ is not full.
\end{proof}

Note that none of above properties required that $\lambda$ be
\emph{invariant} \ie $\lambda(A)=\lambda(T^{-1}(A))$ for any $A\in \B$.
A definition that requires invariance is \emph{weak mixing}:
for every $A,B \in \B$, there is a sequence $n_k \to \infty$ such that
$\mu(T^{-n_k}(A) \cap B) \to \mu(A) \mu(B)$ as $k \to \infty$.
It is known that if $\mu$ is weak mixing, then for every $A,B \in \B$,
there is a set $\mathcal N\subset \N$ of full density such that
$\lim_{{\mathcal N} \owns n \to \infty} \mu(T^{-n}(A) \cap B) = \mu(A) \mu(B)$.

Let us recall two facts, which are Theorem 23 and Proposition 24 from
\cite{BJ}, respectively. We write $C^3_{nf}([0,1])$ for the collection of $C^3$
multimodal
interval maps $f\colon [0,1] \to [0,1]$ where all critical points $c$ are \emph{non-flat}, \ie
there is $\ell_c \in (1,\infty)$ such that $|T(x)-T(c)|/|x-c|^{\ell_c}$
is bounded and bounded away from zero for all $x$
sufficiently close to $c$.

For interval maps, we call
a closed invariant set $A \subset [0,1]$ an {\em attractor} (see \cite{milnor})
if its basin $\{ x \in [0,1] : \omega(x) \subset A\}$
has positive Lebesgue measure, and no proper subset of $A$ has this
property. Examples of attractors $A$ which are also Cantor sets
are the Feigenbaum attractor and the ``wild'' attractor
of a Fibonacci unimodal map of sufficiently high critical order,
see \cite{BKNS}.

\begin{thm}
Let $T \in C^3_{nf}([0,1])$ be a topologically mixing map having no
Cantor attractors. If $\lambda$ is conservative, then it is $\limsup$ full.
\end{thm}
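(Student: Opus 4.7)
The plan is to reduce the $\limsup$ statement to a single-iterate statement, and then attack the latter via the bounded-distortion structure of $C^3_{nf}$ interval maps. Specifically, it suffices to prove that \emph{for every Borel $A$ with $\lambda(A) > 0$ and every $\eps > 0$ there is some $n \ge 0$ with $\lambda(T^n(A)) > 1 - \eps$}. Given this one-shot statement, the theorem follows because $\lambda$ is non-singular (so $\lambda(T^k(A)) > 0$ for every $k$); applying the one-shot statement to $T^k(A)$ produces an iterate $n + k \ge k$ with $\lambda(T^{n+k}(A)) > 1 - \eps$, and letting first $k \to \infty$ and then $\eps \to 0$ along a sequence yields $\limsup_n \lambda(T^n(A)) = 1$.

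To prove the one-shot statement, I would exploit the structure theory of $C^3_{nf}$ maps. Under topological mixing, absence of Cantor attractors, and conservativity of $\lambda$, such a $T$ admits a first-return induced Markov map $T_Y \colon Y \to Y$ on some interval $Y$ disjoint from the critical orbits, whose branches are monotone, uniformly expanding, and enjoy uniform Koebe distortion bounds. The proof then proceeds in four steps: \emph{(i)} given $A$ and $\eta > 0$, use the Lebesgue density theorem to pick a small interval $J$ with $\lambda(A \cap J) \ge (1 - \eta)\lambda(J)$; \emph{(ii)} use conservativity and the Markov structure to find a subinterval $J' \subset J$ and $n_1$ so that $T^{n_1}$ maps $J'$ diffeomorphically onto a branch of the induced partition with uniformly bounded distortion; \emph{(iii)} iterate $T_Y$ a bounded further number of steps to reach a branch that is mapped diffeomorphically onto all of $Y$ with distortion at most a universal constant $K$, whence by Koebe
\[
\lambda\bigl(T^{n_2}(A) \cap Y\bigr) \ge (1 - K\eta)\,\lambda(Y);
\]
\emph{(iv)} use topological mixing, together with bounded-distortion iteration starting from $Y$, to find $n_3$ such that $T^{n_3}$ spreads (a subinterval of) $Y$ over all of $[0,1]$, yielding $\lambda(T^{n_2 + n_3}(A)) \ge 1 - C\eta$ for some $C = C(T)$ independent of $A$. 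Choosing $\eta$ small enough completes the claim.

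The main obstacle, as always for non-uniformly hyperbolic interval maps, is the simultaneous control of distortion over long iterations when the forward orbit of $J$ may pass close to a critical point. The substantive input is the existence of a first-return Markov structure with uniform Koebe bounds under the precise hypotheses of the theorem; this is exactly what the $C^3_{nf}$ condition combined with topological mixing and the absence of Cantor attractors guarantees (via the same structure theorems used in \cite{BJ}). Granted this structure, the remainder is a routine density/distortion calculation combined with topological mixing.
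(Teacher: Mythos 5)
The paper itself contains no proof of this statement: it is recalled verbatim as Theorem~23 of \cite{BJ}, so there is no internal argument to compare yours against; your sketch has to stand on its own, and it does not. The outer reduction and the spreading step (iv) are fine (the latter works because the mixing time $n_3$ is fixed, so a Lipschitz bound on $T^{n_3}$ controls the image of the exceptional set, and mixing gives $T^{n_3}(Y)\supset[\eps,1-\eps]$, which costs only $2\eps$). But the entire difficulty of the theorem is concentrated in the input you assert in step~(ii)--(iii): a \emph{first-return} induced Markov map $T_Y\colon Y\to Y$ on an interval $Y$ disjoint from the critical orbits, with monotone, uniformly expanding branches mapped onto $Y$ with uniform Koebe bounds, claimed to be ``exactly what the hypotheses guarantee.'' This is not an available black box, and as stated it is generally false: for a topologically mixing $C^3_{nf}$ map the critical orbits can be (and typically are) dense, compatibly with conservativity and absence of Cantor attractors, so no interval is disjoint from them; the first-return map to \emph{any} interval then has branches containing critical points of the return iterate, and such folded branches are not mapped onto $Y$, so the first-return map is not full-branched Markov, let alone uniformly expanding. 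Full-branched induced Markov maps do exist for many such maps, but their return times are not first-return times, their construction (principal nests, Hofbauer towers, the real bounds of van Strien--Vargas) is precisely the hard analytic content, and one must in addition prove that a density point of an \emph{arbitrary} positive-measure set is reached at arbitrarily large times by pull-back branches occupying a definite proportion of the interval $J$ from step~(i) -- otherwise the density estimate does not transfer to $Y$. None of this is supplied; citing ``the same structure theorems used in \cite{BJ}'' is circular, since \cite{BJ} is where the theorem is proved.

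A second, related gap: conservativity of $\lambda$ is the one hypothesis beyond smoothness, mixing and no Cantor attractor, and the statement is false without it (compare Theorem~\ref{thm:limfull}, which shows how delicate the relation between recurrence properties of $\lambda$ and fullness is); yet in your sketch it appears only as a word in step~(ii), with no indication of what it is used for. Any correct proof must identify concretely where conservativity enters -- it is what guarantees that a positive-measure set of points does not simply wander off, i.e.\ that a.e.\ point keeps returning, at arbitrarily large times, to a region and a scale where the bounded-distortion mechanism can be applied. Two minor further points: your reduction needs that images (not preimages) of positive-measure sets have positive measure, which is true for Lebesgue and $C^3_{nf}$ maps but is not what the paper's notion of non-singularity (Definition~\ref{def:exact}) asserts; and ``$T^{n_3}$ spreads $Y$ over all of $[0,1]$'' should be weakened as above. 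These are repairable; the missing induced-Markov/return-structure input is the genuine gap.
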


\begin{thm}\label{thm:limfull}
Let $T \in C^3_{nf}([0,1])$ be topologically mixing and denote by $\lambda$ the Lebesgue measure on $[0,1]$. Then the following statements
are equivalent:
\begin{enumerate}
\item there exists an invariant probability measure $\mu \ll \lambda$,
\item $\liminf_{n\to\infty} ƒ\lambda(T^n(A)) > 0$ for every measurable set $A\in \B$, $\lambda(A)>0$,
\item $\lambda$ is full for $T$ .
\end{enumerate}
\end{thm}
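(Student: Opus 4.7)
The plan is to close the cycle $(1) \Rightarrow (3) \Rightarrow (2) \Rightarrow (1)$. The implication $(3) \Rightarrow (2)$ is immediate from the definitions: fullness gives $\lim_n \lambda(T^n(A)) = 1$, which has positive liminf.

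For $(1) \Rightarrow (3)$, I would exploit that, for $T \in C^3_{nf}([0,1])$ topologically mixing, any acip $\mu \ll \lambda$ is exact (a classical result going back to Ledrappier). Writing $h = d\mu/d\lambda$, topological mixing forces $\supp(\mu) = [0,1]$, so $h > 0$ holds $\lambda$-a.e. Exactness gives $\mu(T^n(A)) \to 1$ whenever $\mu(A) > 0$, equivalently $\int_{[0,1] \setminus T^n(A)} h \, d\lambda \to 0$. To upgrade this to $\lambda([0,1] \setminus T^n(A)) \to 0$, fix $\eta > 0$ and split the complement of $T^n(A)$ along $\{h<\eta\}$ and $\{h\ge \eta\}$: the second piece has Lebesgue measure at most $\eta^{-1} \int_{(T^nA)^c} h \, d\lambda \to 0$, and the first is bounded by $\lambda(\{h<\eta\})$, which tends to $0$ as $\eta\to 0$ since $h > 0$ a.e.

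The delicate direction is $(2) \Rightarrow (1)$, which I would establish contrapositively. Assume there is no acip. Under the $C^3_{nf}$ topologically mixing hypothesis, the structure theory for multimodal interval maps (Ledrappier, Martens, Keller--van Strien) gives a dichotomy: either $\lambda$ is dissipative, or $\lambda$ is conservative and every $T$-invariant measure $\mu \ll \lambda$ is infinite $\sigma$-finite. In the conservative-infinite case, Lemma~\ref{lem:exactnotfull} already shows that $\lambda$ is not full; sharpening that construction by applying Lemma~\ref{fpconv0} along a telescoping sequence of sets exhausting the $\sigma$-finite structure of $\mu$, one builds a set $E$ with $\lambda(E) > 0$ and a sequence $n_k \to \infty$ with $\lambda(T^{n_k}(E)) \to 0$, violating (2). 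In the dissipative case, a wandering neighbourhood produces almost disjoint forward iterates, which forces $\lambda(T^n(U)) \to 0$ along a subsequence by summability.

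The main obstacle is the $(2) \Rightarrow (1)$ step, because Perron--Frobenius duality naturally controls preimages, $\lambda(T^{-n}(A)) = \int_A \mathcal{L}^n 1 \, d\lambda$, whereas (2) is phrased in terms of the forward images $\lambda(T^n(A))$. I expect the cleanest route is to show that (2) is in fact equivalent to uniform integrability of $\{\mathcal{L}^n 1\}$ in $L^1(\lambda)$; a Dunford--Pettis weak cluster point of this family then provides the density of an acip. Making the equivalence ``(2) $\Leftrightarrow$ $\{\mathcal{L}^n 1\}$ uniformly integrable'' precise, using the surjectivity of $T$ (from topological mixing) and nonsingularity to relate forward and backward iterates, is the technical heart of the argument.
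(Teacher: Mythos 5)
First, note that the paper itself offers no proof of Theorem~\ref{thm:limfull}: it is quoted verbatim (as Proposition~24 of \cite{BJ}), so your attempt has to be measured against that reference rather than against an argument in the text. Your step (3)$\Rightarrow$(2) is fine, and (1)$\Rightarrow$(3) is essentially right but has two repairable defects: exactness of the acip is a nontrivial input that must be cited, and the inference ``$\supp(\mu)=[0,1]$, hence $h=d\mu/d\lambda>0$ $\lambda$-a.e.'' is a non sequitur (full support never forces an a.e.\ positive density); you need ergodicity/exactness of $\lambda$ itself for these maps to upgrade $\mu\ll\lambda$ to $\mu\sim\lambda$, after which your $\{h<\eta\}$ splitting, or alternatively conservativity $\Rightarrow$ $\limsup$ full plus the unnumbered lemma closing Section~\ref{sec:PrelimMeas}, gives fullness.

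The genuine gap is (2)$\Rightarrow$(1), and it is not closed by either of your two routes. In the conservative branch you lean on Lemmas~\ref{fpconv0} and \ref{lem:exactnotfull}, but both presuppose an infinite $\sigma$-finite invariant measure $\mu\ll\lambda$ (and exactness of $\lambda$); when no acip exists, the existence of such a $\sigma$-finite measure is precisely what is not guaranteed --- the paper explicitly states that even for quadratic maps it is unknown whether $\lambda$ always admits a $\sigma$-finite invariant measure --- so ``sharpening Lemma~\ref{lem:exactnotfull}'' cannot cover all conservative cases. The dissipative branch is incorrect as stated: wandering sets have pairwise disjoint \emph{preimages}, giving $\sum_n\lambda(T^{-n}(W))\le 1$, but this says nothing about the forward images $\lambda(T^n(W))$, which need not be (almost) disjoint and are not summable; worse, for a topologically mixing interval map the image $T^n(J)$ of any nondegenerate interval $J$ is an interval that eventually meets neighbourhoods of both endpoints, so $\lambda(T^n(U))\to1$ for every open $U$, and no open ``wandering neighbourhood'' can have small forward images. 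The dissipative case (for these maps: a metric Cantor attractor $A$) must instead be handled via $\lambda(A)=0$ (by \cite{vSV}) together with an Egorov-type argument trapping a positive-measure set of orbits in shrinking neighbourhoods of $A$, which does give a set violating (2). Finally, your preferred route --- showing (2) is equivalent to uniform integrability of $\{\mathcal{L}^n1\}$ and extracting an acip by Dunford--Pettis --- is exactly the step you leave open (``the technical heart''); converting information about forward images into control of the dual, preimage quantities is the whole difficulty here, and in \cite{BJ} this implication rests on the smooth structure theory of multimodal maps rather than on such soft functional analysis. As it stands, the proposal does not prove (2)$\Rightarrow$(1).
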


This shows that Lebesgue measure is $\limsup$ full but not
full (for $T \in C^3_{nf}([0,1])$) precisely when it is conservative but admits no absolutely
continuous probability measure.
The first such examples (within the quadratic family) were constructed by Johnson \cite{Jon},
and more detailed constructions can be found in \cite{HK, Bthesis}.
Whether there exists a quadratic map for which $\lambda$ does not even admit a
$\sigma$-finite invariant probability measure is unknown.
However, there are cases where a $\sigma$-finite measure $\mu \ll \lambda$
exists such that $\mu(J)=\infty$ for all intervals $J \subset [0,1]$, see
\cite{Bthesis, ABJ, BJL}.

\begin{lem}
Let $\lambda$ be a fully supported non-singular probability measure
which is $\limsup$ full.
If $\lambda$ admits an equivalent $T$-invariant probability measure
$\mu$, then $\lambda$ is full.
\end{lem}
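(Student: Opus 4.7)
The plan is to pass from $\lambda$ to the equivalent invariant measure $\mu$, exploit $T$-invariance to obtain monotonicity of $\mu(T^n(A))$, and then translate the conclusion back to $\lambda$. Fix a measurable $A$ with $\lambda(A)>0$; since $\mu\sim\lambda$ we also have $\mu(A)>0$.

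The one substantive step is the observation that $n\mapsto \mu(T^n(A))$ is non-decreasing. For any measurable $B\subset X$ one has $B\subset T^{-1}(T(B))$, so by $T$-invariance of $\mu$,
$$
\mu(T(B))=\mu(T^{-1}(T(B)))\ge \mu(B).
$$
Taking $B=T^n(A)$ yields $\mu(T^{n+1}(A))\ge\mu(T^n(A))$, and therefore $\mu(T^n(A))\to\ell$ for some $\ell\in[\mu(A),1]$.

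By the limsup-full hypothesis applied to $A$ there is a subsequence $n_k\to\infty$ along which $\lambda(T^{n_k}(A))\to 1$, i.e.\ $\lambda((T^{n_k}(A))^c)\to 0$. Since $\mu$ is a finite measure with $\mu\ll\lambda$, the $\eps$-$\delta$ form of absolute continuity gives $\mu((T^{n_k}(A))^c)\to 0$, and hence $\mu(T^{n_k}(A))\to 1$. Combined with the monotonicity above this forces $\ell=1$, so $\mu(T^n(A))\to 1$ along the full sequence. A final application of $\eps$-$\delta$ absolute continuity in the reverse direction, using that $\lambda\ll\mu$ and $\lambda$ is finite, converts this to $\lambda(T^n(A))\to 1$, giving fullness of $\lambda$.

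I expect no genuine obstacle: the only real idea is the monotonicity identity $\mu(T(B))\ge\mu(B)$ obtained from $B\subset T^{-1}(T(B))$ and invariance, while the two transfers between $\lambda$- and $\mu$-nullity are standard because both measures are finite and mutually absolutely continuous.
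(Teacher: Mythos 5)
Your proposal is correct and is essentially the paper's own argument: the paper also uses the limsup-full subsequence, transfers between $\lambda$ and $\mu$ via the moduli $g^{\pm}(\eps)$ (which is exactly your $\eps$-$\delta$ form of mutual absolute continuity for finite measures), and invokes invariance to get $\mu(T^n(A))\ge \mu(T^{n_k}(A))$ for $n\ge n_k$, which is your monotonicity step $B\subset T^{-1}(T(B))$. No gaps; you merely make the monotonicity explicit where the paper says ``by invariance.''
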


\begin{proof}
Let $g^+(\eps) = \sup\{ \mu(A) : \lambda(A) \le \eps\}$ and
$g^-(\eps) = \sup\{ \lambda(A) : \mu(A) \le \eps\}$.
Since $\mu \ll \lambda$ and both $\mu,\lambda$ are probability measures, we easily obtain that $g^+(\eps) \to 0$ as $\eps \to 0$,
and since $\lambda \ll \mu$ also $g^-(\eps) \to 0$ as $\eps \to 0$.
Let $B$ be an arbitrary measurable set with $\lambda(B) > 0$,
and let $(n_k)$ be a sequence such that $\lambda(T^{n_k}(B)) > 1-1/k$.
Then $\mu(T^{n_k}(B)) \ge 1-g^+(1/k)$ and by invariance,
also $\mu(T^n(B)) \ge 1-g^+(1/k)$ for all $n \ge n_k$.
Using the definition of $g^-$, we find
$\lambda(T^n(B)) \ge 1-g^-(g^+(1/k))$ for all $n \ge n_k$.
Since $\lim_{\eps\to 0} g^+(\eps) = \lim_{\eps\to 0} g^-(\eps) = 0$, it follows that $\lambda$ is full.
\end{proof}

\section{Li-Yorke tuples and $d$-fold product measures}
\label{sec:dfold}
Another way of expressing the $LY_d$-tuples is
$$
LY_d = (\bigcap_{k=1}^\infty \bigcap_{r=1}^\infty \bigcup_{n>r} A_{k,n}) \cap
(\bigcup_{k=1}^\infty \bigcap_{r=1}^\infty \bigcup_{n>r} D_{k,n}),
$$
where
$$
\begin{cases}
A_{k,n} = \{ \x : \max_{i,j} \rho(T^n(x_i), T^n(x_j)) < 1/k\}, \\
D_{k,n} = \{ \x : \min_{i \neq j} \rho(T^n(x_i), T^n(x_j)) > 1/k \}.
\end{cases}
$$
Similarly, we can write
$$
LY_d^\delta = (\bigcap_{k=1}^\infty \bigcap_{r=1}^\infty \bigcup_{n>r} A_{k,n}) \cap
(\bigcap_{r=1}^\infty \bigcup_{n>r} D^\delta_{n}),
$$
where
$$
D^\delta_{n} = \{ \x : \min_{i \neq j} \rho(T^n(x_i), T^n(x_j)) > \delta \}.
$$

Since $A_{k,n}$ and $D_{k,n}$ and $D^\delta_{n}$ are open,
$LY_d$ and $LY_d^\delta$ are Borel sets
(in fact $G_\delta$-sets), hence their indicator function is measurable w.r.t.\ Borel measures on $X^d$, and we can use Fubini's theorem.

The following result is an extension of a result in \cite{BJ}
which states (for interval maps, but the argument is general) that if
$\lambda$ is $\limsup$ full, then the LY-pairs have full measure w.r.t.\ $\lambda_2$.

\begin{proof}[Proof of Theorem~A]
We argue by induction. The initiation step is for $\delta$-LY-pairs with $\delta<\diam(X)/2$.
Given $x \in X$, let $LY_x = \set{ y \in X : (x,y) \in LY^\delta_2}$.
First we argue that $\lambda$-a.e.\ $y$ is $\delta$-separated in pair with $x$.
If this was not the case, the set of points which are not $\delta$-separated in pair with $x$ has positive measure, \ie
$$
\lambda \left(
\bigcup_{r=1}^\infty \bigcap_{n>r} \set{y : \rho(T^n(x),T^n(y))\le \delta}\right)>0.
$$
Therefore, we can find $r \in \N$ such
that $A := \cap_{n > r} \{ y : \rho(T^n(x), T^n(y)) \le \delta\}$ has positive measure.
Fix $0<\gamma< \diam(X)-2\delta$ and observe that
$\diam (T^n(A)) \le 2\delta < \diam(X)-\gamma$ for every $n>r$ and hence
there exists $x_n$
such that $T^n(A)\cap B(x_n;\gamma)=\emptyset$.
In particular $\lambda(T^n(A)) < 1-g(\gamma)<1$ for all $n > r$.
This contradicts the assumption that $\lambda$ is $\limsup$ full.

Similarly, we argue that $\lambda$-a.e.\ $y$ is proximal w.r.t.\ $x$.
Indeed, if not, then we can choose $k \in \N$ sufficiently
large such that the set $D := \cap_{n\ge 0} \{ y \in X : \rho(T^n(x), T^n(y)) > 1/k\}$
has positive measure.
But then $T^n(D) \cap B(T^n(x);1/k) = \emptyset$, and thus
$\lambda(T^n(D)) < 1-g(1/k)  < 1$ for all $n$, so again $\lambda$ is not $\limsup$ full.

The set of $\delta$-LY-pairs can be written as
$LY^\delta_2=\cup_x LY_x$, so by Fubini's theorem, it has full measure.
This completes the first step of induction and proves also
that if $\lambda$ is $\limsup$ full then $\lambda_2(LY^\delta_2)=1$ for any
$\delta<\diam(X)/2$.

We continue the induction, fixing $d\ge 2$ and assuming that $LY^\eps_d$
has full $d$-fold product measure for any $\eps<\diam(X)/2(d-1)$ when $X$ is connected, and for some $\eps>0$ otherwise
(hence every $\eps$ sufficiently small).
If $X$ is connected, then we fix any $0 < \delta<\diam(X)/2d$.
If $X$ is not connected, then we fix any distinct points $a_1,\ldots, a_{d+1}$
and put $\delta=\min\set{\min_{s\neq t} \rho(a_s,a_t)/6, \eps}$.

Take $\x \in LY^\delta_d$, let $(m_u)_{u \ge 1}$ and $(n_u)_{u \ge 1}$ be sequences
along which $\x$ is asymptotic, resp.\ separated, that is
$$
\begin{cases}
\liminf_u \max_{i,j} \rho(T^{m_u}(x_i), T^{m_u}(x_j)) = 0,\\
\limsup_u \min_{i\neq j} \rho(T^{n_u}(x_i), T^{n_u}(x_j)) > \delta.
\end{cases}
$$
and let
$$
LY_{\x} = \left\{ y \in X : \begin{array}{l}
\liminf_u \max_i \rho(T^{m_u}(x_i), T^{m_u}(y)) = 0, \\
\limsup_u \min_i \rho(T^{n_u}(x_i), T^{n_u}(y)) > \delta.
\end{array} \right\}
$$

We show that $\lambda$-a.e.\ $y$ is $\delta$-separated with $\x$ along the
subsequence $(m_u)$.
If the set of non $\delta$-separated points
has positive measure, then there is $r \in \N$ such
that $A := \bigcap_{u>r} \{ y : \min_i \rho(T^{m_u}(x_i), T^{m_u}(y)) \le \delta\}$ has positive measure.
But then $T^{m_u}(A) \subset \bigcup_{i=1}^d B(T^{m_u}(x_i); \delta)$.
If $X$ is connected, then take any $\xi>0$ such that $2d(\delta+2\xi)<\diam(X)$
and in the other case put $\xi=\delta$.

We claim that $\bigcup_{i=1}^d B(T^{m_u}(x_i);\delta+\xi)\neq X$.
First, we consider the case that $X$ is connected. If the claim does not hold then, since $X$ is connected, for every points $p,q\in X$
there are pairwise distinct numbers $i_1,\ldots, i_k$, where $k\le d$,
such that $p\in B(T^{m_u}(x_{i_1});\delta+\xi)$, $q\in B(T^{m_u}(x_{i_1});\delta+\xi)$ and $B(T^{m_u}(x_{i_j});\delta+\xi)\cap B(T^{m_u}(x_{i_{j+1}});\delta+\xi)\neq \emptyset$.
But then
\begin{eqnarray*}
\rho(p,q)&\le& \delta+\xi + 2(k-1)(\delta+\xi)+\delta+\xi \le 2d\delta + 2d\xi\\
&\le& 2d(\delta+2\xi)-2d\xi < \diam(X) - 2\xi.
\end{eqnarray*}
which is a contradiction, since $p,q$ were arbitrary. Indeed, the claim holds.
Similarly, if $X$ is not connected, then by the definition of $\delta$,
each ball $B(T^{m_u}(x_{i_{j+1}});\delta+\xi)$ can contain at most one point $a_s$,
and hence also in this case $\bigcup_{i=1}^d B(T^{m_u}(x_i);\delta+\xi)\neq X$. Indeed, the claim holds.

By the above claim, for every $u$ there is a point $q_u$ such that
$$
B(q_u;\xi)\cap \bigcup_{i=1}^d B(T^{m_u}(x_i);\delta)=\emptyset
$$
which in particular implies that $\lambda(T^{m_u}(A)) < 1-g(\xi)$ for all $u > r$.
This contradicts that $\lambda$ is full. This proves that $\lambda$-a.e.\ $y$ is $\delta$-separated with $\x$ along the
subsequence $(m_u)$.

Similarly, we argue that $\lambda$-a.e.\ $y$ is proximal w.r.t.\ $\x$ along the subsequence $(n_u)$.
Indeed, otherwise we can choose $k \in \N$ sufficiently
large such that the set $D := \bigcap_{u=0}^\infty \{ y \in X : \max_i \rho(T^{n_u}(x_i), T^{n_u}(y)) \ge 1/k\}$
has positive measure. Passing to a subsequence of $n_u$ if necessary
we may assume that for every $u$ there is $j$ such that if $y\in D$ then $\rho(T^{n_u}(x_j), T^{n_u}(y))\ge 1/k$.
But then $T^{n_u}(D) \cap (B(T^{n_u}(x_j);1/k)) = \emptyset$ and so
$\lambda(T^{n_u}(D)) < 1-g(1/k)  < 1$ for all $u$,
therefore again $\lambda$ is not full.

The set of $\delta$-LY $(d+1)$-tuples contains the set
$LY^\delta_{d+1}\supset \bigcup_{\x \in LY^\delta_d} LY_{\x}$, so again by Fubini's theorem, it has full measure.
This completes the proof.
\end{proof}

\begin{lem} If the $d$-fold product $(X^d, T_d)$
has a conservative non-atomic product measure $\lambda_d$,
then the set of $d$-tuples that are not asymptotic has
full $\lambda_d$-measure.
\end{lem}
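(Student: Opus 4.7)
The plan is to combine Poincaré recurrence, as formulated in Remark~\ref{rem:conservative}, with the fact that non-atomicity of $\lambda_d$ forces the diagonal to carry zero mass. Write $\Delta := \{(x,\dots,x) : x \in X\} \subset X^d$ and $\phi(\x) := \max_{i\ne j} \rho(x_i,x_j)$, so that $\x$ is asymptotic precisely when $\phi(T_d^n\x) \to 0$. The diagonal is the one ``trivial'' source of asymptotic tuples, and everything off the diagonal should be handled by Poincaré recurrence applied to the exhausting family $B_k := \{\x \in X^d : \phi(\x) \ge 1/k\}$, since $\bigcup_{k \ge 1} B_k = X^d \setminus \Delta$.

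I would proceed in three steps. First, I verify that $\lambda_d(\Delta) = 0$: non-atomicity of the product measure $\lambda_d$ is equivalent to non-atomicity of $\lambda$, and Fubini applied to $\Delta \subset \{(x_1,\dots,x_d) : x_1 = x_2\}$ gives $\lambda_d(\Delta) \le \int_X \lambda(\{x\}) \, d\lambda(x) = 0$. Second, for each $k$ with $\lambda_d(B_k) > 0$ I apply Remark~\ref{rem:conservative} to the conservative system $(X^d, T_d, \lambda_d)$ to conclude that
$$
N_k := \set{ \x \in B_k : T_d^n\x \in B_k \text{ only finitely often} }
$$
is $\lambda_d$-null; for $k$ with $\lambda_d(B_k) = 0$ I simply put $N_k := B_k$. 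Third, set $N := \Delta \cup \bigcup_k N_k$, which is still $\lambda_d$-null. For any $\x \in X^d \setminus N$ we have $\phi(\x) > 0$, so $\x \in B_{k_0}$ for some $k_0$; since $\x \notin N_{k_0}$, the orbit satisfies $T_d^n\x \in B_{k_0}$ infinitely often, whence $\phi(T_d^n\x) \ge 1/k_0$ infinitely often and $\x$ is not asymptotic. Therefore the asymptotic $d$-tuples are contained in $N$ and form a $\lambda_d$-null set.

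The only point that demands some attention is that conservativity controls recurrence only from within a positive-measure set, and not a priori at $\lambda_d$-a.e.\ $\x \in X^d$. This obstacle is circumvented by the observation that every non-diagonal $\x$ already belongs to some $B_k$, so the required ``infinitely often return to $B_k$'' statement is read off directly from Poincaré recurrence \emph{inside} $B_k$, with no need to track orbits coming from outside. Non-atomicity of $\lambda_d$ is precisely what guarantees that the set of $\x$ not captured by any $B_k$, namely the diagonal itself, is negligible.
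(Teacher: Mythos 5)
Your proof is correct and uses essentially the same ingredients as the paper's argument: non-atomicity makes the diagonal $\lambda_d$-null (Fubini), and conservativity of $\lambda_d$, via Remark~\ref{rem:conservative}, forces almost every point of a positive-measure set bounded away from the diagonal to return to it infinitely often, which is incompatible with being asymptotic. The paper packages this as a contradiction argument with a single set $A\setminus\Delta_\eps$, whereas you exhaust $X^d\setminus\Delta$ by the sets $B_k$ and argue directly; the difference is purely organizational.
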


\begin{proof}
Assume on the contrary, that the set of asymptotic $d$-tuples has positive measure, \ie
\begin{equation}
\label{eq:As}
\lambda_d(\bigcap_{k>0}\bigcup_{r>0}\bigcap_{n>r}A_{k,n})=\alpha>0.
\end{equation}
where $A_{k,n}=\set{\x : \max_{i,j} \rho(T^n(x_i), T^n(x_j)) < 1/k}$.
Take $\eps>0$ sufficiently small, such that if we denote by $\Delta_\eps$ the $\eps$-neighbourhood
of the diagonal $\Delta = \{ \x : x_i = x_j \text{ for all } 1 \le i,j \le d \}$
then $\lambda_d(\Delta_\eps)<\alpha/2$.
Fix any $k \in \N$ such that $1/k<\eps$. By \eqref{eq:As}
there is $m>0$ such that
$$
\lambda_d(\bigcup_{r=1}^m \bigcap_{n>r}A_{k,n})>\alpha/2.
$$
Denote $A=\bigcup_{r=1}^m \bigcap_{n>r}A_{k,n}=\bigcap_{n>m}A_{k,n}$.
Note that
$T_d^n(A) \subset \Delta_\eps$ for all $n \ge m$ and $\lambda_d(A')>0$
for $A' = A \setminus \Delta_\eps$. By conservativity, $\lambda(T_d^n(A') \cap A') > 0$ for some $n>m$
which contradicts the definition of $A$.
Therefore the set of asymptotic $d$-tuples is of measure zero and the lemma follows.
\end{proof}

The following is a simple extension of a well-known fact for LY-pairs
(see \eg \cite{BHR}):

\begin{lem}\label{lem:weak_mixing}
If $X$ has at least two points and $\mu$ is a weakly mixing, fully supported, $T$-invariant Borel
measure, then there is $\delta_d>0$ such that the $\delta_2$-LY $d$-tuples have full $\mu_d$-measure for any integer $d >1$.
\end{lem}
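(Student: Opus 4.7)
The plan is to leverage the classical fact that weak mixing of $\mu$ is inherited by the $d$-fold product $(X^d, T_d, \mu_d)$ for every $d \ge 2$, and then to apply Poincar\'e recurrence (via ergodicity of $\mu_d$) to two open sets of positive $\mu_d$-measure: a neighbourhood of the diagonal (yielding proximality) and a region where all $d$ coordinates are pairwise far apart (yielding separation).

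I would first record that $\mu$ must be non-atomic. Indeed, if $\mu$ had an atom, then by $T$-invariance its purely atomic part would be ergodic and supported on a single periodic orbit, which is incompatible with $\mu \times \mu$ being ergodic unless $\supp \mu$ is a single point; this contradicts $|X| \ge 2$ combined with $\supp \mu = X$. I would then invoke the standard fact that weak mixing of $\mu$ implies weak mixing (in particular ergodicity) of $\mu_d$ on $(X^d, T_d)$ for every $d \ge 2$.

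With these two ingredients in hand, the proximal conclusion follows by setting $U_k = \set{\x \in X^d : \max_{i,j}\rho(x_i,x_j) < 1/k}$. Full support of $\mu$ (combined with compactness, as used in Section~\ref{sec:PrelimMeas}) ensures $\mu_d(U_k) > 0$, since $U_k$ contains a product of $d$ equal small balls of positive $\mu$-measure. By ergodicity of $\mu_d$ (and Poincar\'e recurrence), $\mu_d$-a.e.\ $\x$ satisfies $T_d^n \x \in U_k$ for infinitely many $n$; intersecting over $k \in \N$ yields $\liminf_n \max_{i,j}\rho(T^n x_i, T^n x_j) = 0$ $\mu_d$-a.e.

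For separation, non-atomicity gives $\mu_d\bigl(\bigcup_{i\ne j}\set{\x : x_i = x_j}\bigr) = 0$, so the sets $W_\delta = \set{\x : \min_{i\ne j}\rho(x_i,x_j) > \delta}$ satisfy $\mu_d(W_\delta) \to 1$ as $\delta \to 0^+$. Pick $\delta_d > 0$ with $\mu_d(W_{\delta_d}) > 0$; ergodicity of $\mu_d$ then forces $T_d^n \x \in W_{\delta_d}$ infinitely often on a set of full $\mu_d$-measure, whence $\limsup_n \min_{i\ne j}\rho(T^n x_i, T^n x_j) \ge \delta_d$. Combining with the proximal conclusion shows $\mu_d(LY_d^{\delta_d/2}) = 1$. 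The only slightly delicate point is excluding atoms; the remainder is a straightforward application of the product-weak-mixing property and ergodic recurrence, so I do not anticipate any genuine obstacle.
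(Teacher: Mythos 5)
Your proposal is correct and follows essentially the same route as the paper: pass the weak mixing of $\mu$ to the product measure $\mu_d$ and then show that $\mu_d$-a.e.\ tuple visits a small neighbourhood of the diagonal and a uniformly separated region infinitely often. The only (harmless) differences are that you derive the infinitely-often visits from ergodicity of $\mu_d$ plus Poincar\'e recurrence rather than from the weak-mixing correlation limit used in the paper's contradiction argument, and you get positive measure of the separated set via non-atomicity of $\mu$, where the paper instead notes $X$ is infinite and chooses an open set whose closure misses all pairwise diagonals.
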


\begin{proof}
We start with proximality.
Assume by contradiction that there is a set
$A \subset X^d$ with $\mu_d(A) > 0$, and
$m \in \N$, $\eps > 0$ such that
$T_d^n(A) \cap \Delta_\eps = \emptyset$ for every $n \ge m$.
Since $\mu$ is fully supported, $\mu_d(\Delta_\eps) > 0$.
But $\mu_d$ is weak mixing, hence the product measure $\mu_d$ is weak mixing too, and so
there is a subsequence $(n_k)$ such that
$\mu_d(T_d^{-n_k}(\Delta_\eps) \cap A) \to \mu_d(\Delta_\eps) \mu_d(A)>0$.
This implies that there is $n>m$ such that $T_d^n(A)\cap \Delta_\eps\neq \emptyset$, contradicting the definition
of $A$.
Therefore $\mu_d$-a.e.\ $\x$ is proximal along a subsequence.

Since $X$ has at least two points and $\mu$ is weakly mixing, $X$ is infinite.
In particular, there exists an open set $U\subset X^d$ such that
$\overline{U}\cap \Delta_{i,j}=\emptyset$ for every $i\neq j$, where $\Delta_{i,j} :=
\{ \x \in X^d : x_i = x_j\}$. Take any $0<\delta < \inf_{\x\in U}\min_{i\neq j} \rho(x_i,x_j)$.
We use the same argument, to show that a.e.\ $\x$ visits $U$ infinitely often under action of $T_d$.
Combining the two, we obtain that  $\mu_d$-a.e.\ $d$-tuple is $\delta$-LY.
\end{proof}

\begin{rem}\label{rem:delta_2}
It is clear from the proof that the statement of Lemma~\ref{lem:weak_mixing}
holds for any $\delta_2<\diam (X)$.
\end{rem}

\section{Li-Yorke tuples in the continuous setting}\label{sec:continuous}

Let $T$ be a continuous map on compact metric space $(X,\rho)$.
We say that $T$ is \emph{topologically mixing} if for every pair of open sets $U,V \subset X$,
$T^n(U) \cap V \neq \emptyset$ for all $n$ sufficiently large.
We say that $T$ is {\em topologically weak mixing} if $T\times T$ is transitive on $X\times X$.

The following fact is standard and its utility to Li-Yorke chaos dates back at least to works of Iwanik \cite{I}.

\begin{lem}\label{lem:inv-d-dist}
Let $(X,\rho)$ be a compact metric space with at least two points and let $T\colon X \to X$ be topologically weakly mixing.
For every $d>1$ there is $\delta_d>0$ such that the set of $\delta_d$-LY $d$-tuples is residual.
\end{lem}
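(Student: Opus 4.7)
The plan is to prove residuality via the Baire category theorem, using Furstenberg's classical theorem that topological weak mixing of $T$ forces every $d$-fold product $T_d=T\times\cdots\times T$ to be topologically transitive on $X^d$. I first record two structural observations: (i) since $T$ is weakly mixing and $|X|\ge 2$, the space $X$ is perfect — a standard argument shows that any isolated point $x_0$ would cause the orbit of any $T\times T$-transitive point, once it hits the open singleton $\{(x_0,x_0)\}$, to remain on the diagonal forever, contradicting density of that orbit in $X\times X$; (ii) consequently $X^d$ is a compact perfect Baire space and $T_d$ is transitive on it, so the set $\mathrm{Tr}(T_d)$ of $T_d$-transitive points is a dense $G_\delta$ in $X^d$.

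Next I establish the central technical lemma: for every non-empty open $U\subseteq X^d$ and every $r\ge 0$, the set $G_{U,r}:=\bigcup_{n>r}T_d^{-n}(U)$ is open and dense. Openness is clear. For density it suffices to check $\mathrm{Tr}(T_d)\subseteq G_{U,r}$. Indeed, if some $\x\in\mathrm{Tr}(T_d)$ met $U$ only finitely often, with the last visit at time $N$, then the orbit of $T_d^{N+1}(\x)$ would avoid $U$ entirely; but that orbit equals the orbit of $\x$ minus finitely many points, and removing finitely many points from a dense set in the perfect space $X^d$ leaves a dense set, which would force $X^d\subseteq U^c$, absurd.

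To obtain proximality I apply the lemma to $U_k:=\{\x\in X^d:\max_{i,j}\rho(x_i,x_j)<1/k\}$, a non-empty open neighbourhood of the diagonal. The set of proximal $d$-tuples is $\bigcap_k\bigcap_r G_{U_k,r}$, a countable intersection of dense open sets, hence residual. For the $\delta_d$-separation part I use perfectness of $X$ to pick pairwise distinct $y_1,\ldots,y_d\in X$, set $3\eta:=\min_{i\ne j}\rho(y_i,y_j)>0$, $\delta_d:=\eta/2$, and $V:=\prod_{i=1}^d B(y_i;\eta)$; every $\x\in V$ then satisfies $\min_{i\ne j}\rho(x_i,x_j)>\eta>\delta_d$. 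Consequently $\bigcap_r G_{V,r}$ is residual, and any $\x$ in it has $T_d^n(\x)\in V$ for infinitely many $n$, whence $\limsup_n\min_{i\ne j}\rho(T^n(x_i),T^n(x_j))\ge\eta>\delta_d$. Intersecting the two residual sets shows that $LY_d^{\delta_d}$ is residual.

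The only non-routine point — which I regard as the main obstacle — is the density half of the technical lemma: it requires the observation that transitive points of $T_d$ must hit every non-empty open set infinitely often, which in turn rests on $X^d$ having no isolated points. Furstenberg's theorem on products of topologically weakly mixing systems is invoked as a black box; everything else reduces to routine $G_\delta$ bookkeeping.
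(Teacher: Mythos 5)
Your proposal is correct and follows essentially the same route as the paper: both rest on Furstenberg's theorem that weak mixing makes $T_d$ transitive on $X^d$, on perfectness of $X$, and on the observation that a tuple whose $T_d$-orbit repeatedly visits small neighbourhoods of the diagonal and a product of balls around $d$ distinct points is $\delta_d$-LY. The paper simply takes the residual set of $T_d$-transitive points outright, while you unfold the same fact into the dense open sets $G_{U,r}$; this extra Baire bookkeeping changes nothing essential.
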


\begin{proof}
Since $X$ is not a singleton, topological mixing implies
that $X$ is an infinite set without isolated points. Fix a sequence of pairwise distinct points $\set{a_i}_{i=1}^\infty$
and let $\delta_d=\inf_{1\le i< j\le d} \rho(a_i,a_j)/3$.
Now, any $d$-tuple with orbit dense in $X^d$ is $\delta_d$-LY and the set of such tuples is residual by transitivity of $T_d$.
%
\end{proof}

Let us recall an important fact which can be derived from works of Kuratowski and Mycielski (see \eg \cite{Myc64}).
We recall that $M\subset X$ is called a {\em Mycielski set}
if it is a countable union of Cantor sets.

\begin{thm}[Kuratowski-Mycielski]\label{Mycielski}
 Let $X$ be a perfect complete metric space, and assume that
$R_k$ is a residual subset of $X^{n_k}$, where $n_k\ge 2$ for each $k\in \N$. Then there exists a Mycielski set $M$ dense in $X$ such that for each $k\in \N$ if points $x_1,\cdots, x_{n_k}\in M$ are pairwise distinct then $(x_1,\cdots, x_{n_k})\in R_k$.
\end{thm}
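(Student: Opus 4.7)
This is the classical Kuratowski--Mycielski theorem; the plan is to adapt its standard proof via a Cantor scheme. Since each $R_k$ is residual, write $R_k = \bigcap_{m \geq 1} G_{k,m}$ with each $G_{k,m}$ open and dense in $X^{n_k}$. Enumerate all pairs $(k,m)$ as a sequence $(k_t, m_t)_{t \geq 1}$, thinking of the $t$-th entry as a \emph{product task}: ensure that products of $n_{k_t}$ distinct balls lie in $G_{k_t, m_t}$. Fix also a countable basis $\{V_j\}_{j \geq 1}$ of $X$ providing \emph{density tasks}, and interlace the two lists so that every task is addressed infinitely often.

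The core step is an inductive construction, for $s \geq 0$, of a Cantor scheme of non-empty closed balls $\{B_\sigma : \sigma \in \{0,1\}^s\}$ satisfying: (i) $\diam(B_\sigma) < 2^{-s}$ for $s \geq 1$; (ii) $B_{\sigma 0} \cap B_{\sigma 1} = \emptyset$ and $B_{\sigma 0} \cup B_{\sigma 1} \subset \mathrm{int}(B_\sigma)$; (iii) for every $t \leq s$ and every pairwise distinct $\sigma_1, \dots, \sigma_{n_{k_t}} \in \{0,1\}^s$, the product $B_{\sigma_1} \times \cdots \times B_{\sigma_{n_{k_t}}}$ is contained in $G_{k_t, m_t}$; and (iv) for each density task $V_j$ scheduled by stage $s$, some $B_\sigma$ with $|\sigma| = s$ lies inside $V_j$. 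The inductive step uses perfectness of $X$ (to split each existing ball into two small disjoint closed sub-balls) and the fact that each $G_{k_t,m_t}$ is open and dense in the ambient product space, so that after splitting we may jointly shrink the finitely many new sub-balls to force every product of $n_{k_t}$ distinct children into $G_{k_t,m_t}$; a finite conjunction of open dense conditions is still open dense, so the required simultaneous shrinking exists.

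Setting $M = \bigcap_{s \geq 0} \bigcup_{|\sigma| = s} B_\sigma$, conditions (i) and (ii) make $M$ a Cantor set (hence a Mycielski set in the required sense), and (iv) together with density of $\{V_j\}$ in $X$ makes $M$ dense in $X$. Given pairwise distinct $x_1, \dots, x_{n_k} \in M$, these points fall into distinct level-$s$ balls once $s$ is large enough; since the pair $(k,m)$ is scheduled at arbitrarily large stages $t$, condition (iii) applied at such a stage forces $(x_1, \dots, x_{n_k}) \in G_{k,m}$ for every $m$, so the tuple lies in $R_k$. The main difficulty is the simultaneous bookkeeping at each inductive step --- splitting all $2^s$ existing balls and then meeting finitely many product conditions in several ambient product spaces $X^{n_{k_t}}$ at once, while preserving disjointness, the diameter bound, and the new density task --- but perfectness of $X$ combined with the Baire-style observation that a finite intersection of open dense sets remains open dense overcomes this.
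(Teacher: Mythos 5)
Your overall strategy (Cantor scheme plus bookkeeping of the open dense sets $G_{k,m}$ whose intersection is $R_k$) is the standard route to this theorem, and your handling of the product conditions (iii) is fine: since only finitely many tuples of distinct balls occur at each stage and each $G_{k_t,m_t}$ is open and dense, successive shrinking preserves earlier containments and meets the new ones. The paper itself does not prove the statement but cites it (Mycielski, Kuratowski), so the only question is whether your sketch is sound --- and it has one genuine gap: the set $M=\bigcap_{s\ge 0}\bigcup_{|\sigma|=s}B_\sigma$ produced by a single binary scheme is a \emph{compact} set homeomorphic to the Cantor set, hence closed in $X$; a closed dense subset equals $X$, so $M$ can never be dense unless $X$ itself is a Cantor set (it certainly is not for $X=[0,1]$, the case used in the paper). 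Concretely, your condition (iv) is unattainable: all level-$s$ balls are nested inside the two level-$1$ balls $B_0,B_1$, each of diameter $<1/2$, so once a scheduled basic set $V_j$ is disjoint from $B_0\cup B_1$ no level-$s$ ball can lie inside it. This is exactly why the theorem asserts a \emph{Mycielski} set (a countable union of Cantor sets) rather than a single Cantor set.

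The repair is to run a multi-rooted scheme: at stage $s$ you not only split and shrink all existing balls, but also \emph{adjoin a new root ball} placed inside the basic open set $V_j$ scheduled at that stage (possible since $V_j$ is open and $X$ is perfect, and you may choose it disjoint from the finitely many existing balls). Condition (iii) must then be imposed on all products of pairwise distinct balls from the \emph{entire current collection}, not merely among children of a single scheme; otherwise tuples whose coordinates come from different Cantor sets $C_i$ (the limit sets below the different roots) would escape the conditions. With that change, $M=\bigcup_i C_i$ is a countable union of Cantor sets meeting every $V_j$, hence a dense Mycielski set, and your final separation argument (distinct points of $M$ eventually lie in distinct balls, and each pair $(k,m)$ is scheduled at arbitrarily large stages) goes through verbatim. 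Note also that simply building one Cantor set per $V_j$ by independent constructions would not suffice, for the same cross-tuple reason; the construction must be carried out jointly.
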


The following fact is known, but since the proof is simple, we sketch it for completeness.

\begin{lem}\label{lem:dense_tup_LY}
Let $(X,\rho)$ be a compact metric space with at least two points and let $T\colon X \to X$ be topologically weakly mixing.
There is a sequence $\set{\delta_d}_{d=2}^\infty \subset (0,1)$ and a Mycielski set $M\subset X$ such that for any $d\ge 2$, any $d$ pairwise distinct points $x_1,\ldots, x_d\in M$ and any integers $s_1,\ldots,s_d\ge 0$ the $d$-tuple
$(T^{s_1}(x_1),\ldots, T^{s_d}(x_d))$ is $\delta_d$-LY.
Additionally, $\omega(x,T)=X$ for every $x\in M$.
\end{lem}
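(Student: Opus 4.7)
The plan is to apply the Kuratowski--Mycielski theorem (Theorem~\ref{Mycielski}) with a carefully chosen family of residual sets. For each $d \ge 2$ I would let $R_d \subset X^d$ denote the set of $d$-tuples whose $T_d$-orbit is dense in $X^d$. Since $T$ is topologically weakly mixing with $|X|\ge 2$, the product $T_d$ is topologically transitive on the compact Polish space $X^d$, which is moreover perfect (weak mixing forces $X$ to be infinite, and a topologically transitive map on an infinite compact space has no isolated points), so by the standard Baire-category argument each $R_d$ is a dense $G_\delta$ set. The constants $\delta_d \in (0,1)$ would be chosen exactly as in Lemma~\ref{lem:inv-d-dist}, for instance $\delta_d := \min\{\tfrac12,\ \inf_{1\le i<j\le d}\rho(a_i,a_j)/3\}$ for a fixed sequence $\{a_i\}_{i=1}^\infty$ of pairwise distinct points in $X$.

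The heart of the argument is a \emph{shift-invariance} claim: if $\x \in R_d$, then for every choice of non-negative integers $s_1,\dots,s_d$ the tuple $\Phi_s(\x):=(T^{s_1}(x_1),\dots,T^{s_d}(x_d))$ again has dense $T_d$-orbit. The key observation is that topological transitivity on a compact space forces $T$, and hence every $T^{s_j}$, to be surjective; consequently, for any basic open box $B(a_1;\eps)\times\cdots\times B(a_d;\eps)\subset X^d$ the preimage $\prod_j T^{-s_j}(B(a_j;\eps))$ is a non-empty open subset of $X^d$. Density of the $T_d$-orbit of $\x$ then yields some $n$ with $T_d^n(\x)\in\prod_j T^{-s_j}(B(a_j;\eps))$, which is equivalent to $T_d^n(\Phi_s(\x))$ lying in the original box. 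This is the step I expect to be the main obstacle: transferring orbit density through the non-invertible time-shifts $\Phi_s$ really does use surjectivity of $T$ and is not formal. Combined with the dense-orbit argument of Lemma~\ref{lem:inv-d-dist}, dense $T_d$-orbit of $\Phi_s(\x)$ yields the $\delta_d$-LY conclusion.

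To conclude I would invoke Theorem~\ref{Mycielski} with the countable family $\{R_d\}_{d \ge 2}$ to produce a dense Mycielski set $M \subset X$ such that every pairwise distinct $d$-tuple from $M$ belongs to $R_d$; the shift-invariance step then upgrades this to the desired $\delta_d$-LY statement for all $s_1,\dots,s_d \ge 0$. The $\omega$-limit condition is automatic: $M$ is dense and hence infinite, so for each $x \in M$ one can pick $y \in M\setminus\{x\}$ to obtain $(x,y) \in R_2$, whose dense $T_2$-orbit projects to a dense $T$-orbit of $x$; because $X$ is perfect, the forward tail $\{T^n(x):n\ge N\}$ remains dense for every $N$ (removing finitely many points from a set that is dense in a perfect space preserves density), forcing $\omega(x,T)=X$.
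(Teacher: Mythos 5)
Your proposal is correct and takes essentially the same route as the paper's (sketched) proof: residuality of the dense $T_d$-orbit tuples, the constants $\delta_d$ from Lemma~\ref{lem:inv-d-dist}, and an application of Theorem~\ref{Mycielski}. Your explicit treatment of the time-shifts $s_1,\dots,s_d$ via surjectivity of $T$ and of $\omega(x,T)=X$ via perfectness of $X$ merely fills in details the paper leaves implicit.
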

\begin{proof}
Combine the technique from the proof of Lemma~\ref{lem:inv-d-dist} with Theorem~\ref{Mycielski} and the fact that the set of points with
dense orbit is residual in $X$
to obtain a desired Mycielski set.
\end{proof}

Let $\Sigma_2^+=\set{0,1}^\N$ be endowed with the standard prefix metric, and let
$\sigma$ be the left shift on $\Sigma_2^+$.
The following fact was proved by Moothathu in \cite{tksm}
(which extends the existence of horseshoes argument from
Misiurewicz \& Szlenk \cite{MSz} to the $C^0$ setting ensuring that
the map $\pi$ is really injective everywhere).

\begin{lem}\label{lem:conj}
If a map $T$ acting on the unit interval has positive topological entropy, then
there exist $n>0$, a $T^n$-invariant closed set $\Lambda$
and a homeomorphism $\pi\colon \Lambda \to \Sigma_2^+$ such that $\pi \circ (T^n|_{\Lambda})=\sigma \circ \pi$.
\end{lem}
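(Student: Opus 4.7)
The plan is to combine the classical Misiurewicz--Szlenk horseshoe for interval maps with a careful equivariant selection that promotes the resulting coding semi-conjugacy to a genuine conjugacy. Since $h_{top}(T)>0$, the Misiurewicz--Szlenk theorem produces an integer $n$ and two closed intervals $J_0,J_1\subset[0,1]$ with disjoint interiors such that $T^n(J_i)\supseteq J_0\cup J_1$ for $i=0,1$. By slightly shrinking $J_0,J_1$ to disjoint subintervals $I_0,I_1$ while keeping the covering relation $T^n(I_i)\supseteq I_0\cup I_1$ intact (and passing to a larger multiple of $n$ if necessary), one may assume $I_0\cap I_1=\emptyset$, which is essential for a single-valued coding.

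Next, I would introduce the natural symbolic coding. Set
\[
\Lambda_0 := \bigcap_{k\ge 0} T^{-nk}(I_0\cup I_1),
\]
and for $x\in\Lambda_0$ define $\pi_0(x)=(a_0,a_1,\ldots)\in\Sigma_2^+$ by requiring $T^{nk}(x)\in I_{a_k}$. Disjointness of $I_0,I_1$ makes $\pi_0$ well-defined; continuity is immediate, and $\pi_0\circ T^n|_{\Lambda_0}=\sigma\circ \pi_0$ follows from the definition. Surjectivity onto $\Sigma_2^+$ is obtained by a standard nested compact sets argument applied to the cylinders
\[
C_N(a_0,\ldots,a_N) := \bigcap_{k=0}^N T^{-nk}(I_{a_k}),
\]
which are nonempty closed subsets of $[0,1]$ by induction on $N$ using the horseshoe covering relation and the intermediate value theorem inside each branch of $T^n|_{I_{a_0}}$ that covers $I_{a_1}$.

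The main obstacle is \emph{injectivity}. In the $C^0$ category there is no expansion available to force $\mathrm{diam}(C_N)\to 0$, and fibers $\pi_0^{-1}(s)=\bigcap_N C_N$ may well contain more than one point. This is exactly where Moothathu's refinement enters. Since distinct fibers are disjoint subsets of $[0,1]$, the family $\mathcal{F}$ of fibers of cardinality greater than one is at most countable, and $T^n$ carries each element of $\mathcal{F}$ continuously into another, yielding an action above $\sigma$. I would enumerate $\mathcal{F}$ as countably many orbits under this induced action, then make a coherent shift-equivariant choice of a single representative from every non-degenerate fiber (for instance, selecting left-endpoints or right-endpoints coherently along each orbit, while tracking how $T^n$ permutes endpoints within an orbit), and let $\Lambda\subset\Lambda_0$ be the resulting set. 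By construction $T^n(\Lambda)\subseteq\Lambda$, $\pi:=\pi_0|_\Lambda$ is a bijection onto $\Sigma_2^+$, and the equivariance of the selection is what guarantees that $\Lambda$ remains closed.

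Finally, $\Lambda$ being a closed subset of $[0,1]$ is compact, and $\pi:\Lambda\to\Sigma_2^+$ is a continuous bijection between compact metric spaces, hence a homeomorphism with $\pi\circ T^n|_\Lambda = \sigma\circ\pi$, as required. The hard step is the third paragraph: producing the equivariant endpoint selection that simultaneously keeps $\Lambda$ closed and respects $\sigma$. This is the delicate $C^0$ input of Moothathu, which replaces the bounded-distortion argument that in the smooth setting would directly force cylinder diameters to zero and render injectivity automatic.
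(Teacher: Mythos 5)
Your first two paragraphs are the classical Misiurewicz--Szlenk construction and are fine: positive entropy yields (for some power and, after passing to strict turbulence, disjoint intervals) a covering pair, the itinerary map $\pi_0$ is well defined, continuous, equivariant, and surjective. But this only reproduces the standard \emph{semi}-conjugacy; the entire content of the lemma — and the reason the paper does not prove it but cites Moothathu \cite{tksm}, "ensuring that the map $\pi$ is really injective everywhere" — is the upgrade to a genuine conjugacy, and your third paragraph does not supply it. Concretely: (a) the claim that the fibers of $\pi_0$ with more than one point form an at most countable family is unjustified — pairwise disjoint subsets of $[0,1]$ of cardinality $\ge 2$ can be uncountable in number; countability would require each bad fiber to contain an interval, and the fibers $\bigcap_N C_N$ of a $C^0$ coding need not be connected, let alone have nonempty interior. (b) The existence of a shift-equivariant selection is not established. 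Equivariance means $T^n(p(s))=p(\sigma s)$ for every $s$, and since both fibers $F_{0t}$ and $F_{1t}$ map \emph{into} $F_t$, the chosen points in the two preimage fibers must have the \emph{same} image; endpoint choices do not achieve this ($T^n$ need not send endpoints to endpoints, and $T^n(F_{0t})$, $T^n(F_{1t})$ may be proper, even disjoint, subsets of $F_t$), so it is not even clear that any equivariant selection exists without substantially modifying the construction. (c) Closedness of the selected set $\Lambda$ is asserted, not proved: singleton fibers can accumulate on a point of a nondegenerate fiber other than the chosen representative, in which case $\Lambda$ is not closed, and passing to its closure destroys injectivity.

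In short, the step you yourself flag as "the delicate $C^0$ input of Moothathu" is exactly the statement to be proved; the proposal reduces the lemma to that step and then defers it, while the intermediate reductions it does offer (countably many bad fibers, equivariant endpoint selection preserving closedness) are respectively false as stated and unsubstantiated. To be a proof, you would either have to reproduce Moothathu's actual argument or explicitly cite \cite{tksm} for the injectivity, as the paper does.
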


\begin{thm}\label{constr:m1m2}
Let $T\colon [0,1] \to [0,1]$ be topologically mixing. There exist dense Mycielski sets $M_1,M_2,M_3$ and numbers $\delta_n>0$ such that:
\begin{enumerate}
\item each $n$-tuple consisting of $n\ge 2$ distinct points in $M_i$ is $\delta_n$-LY, where $i=1,2$,
\item every point in $M_1$ has dense orbit in $[0,1]$,
\item there exists a minimal Cantor set $A$ such that $\omega(x,T)=A$ for every $x\in M_2$ and $M_2\cap A$ contains a Cantor set,
\item $M_3\times M_3$ contains no Li-Yorke pairs.
\end{enumerate}
\end{thm}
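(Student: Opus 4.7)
The plan is to construct $M_1$, $M_2$ and $M_3$ separately, each addressing its own condition of the theorem.

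\textbf{$M_1$.} Since $T$ is topologically mixing, hence topologically weakly mixing, Lemma~\ref{lem:dense_tup_LY} directly produces a dense Mycielski set $M_1\subset[0,1]$ and constants $\delta_d$ such that every tuple of distinct points of $M_1$ is $\delta_d$-Li--Yorke, and every $x\in M_1$ has $\omega(x,T)=[0,1]$. This yields (1) for $i=1$ and (2).

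\textbf{$M_2$.} The plan is to find a minimal Cantor subsystem on which some iterate of $T$ acts weakly mixingly, then run the Mycielski machine inside it. Using Lemma~\ref{lem:conj} fix $n\ge 1$ and a Cantor $\Lambda\subset[0,1]$ with $\pi\colon\Lambda\to\Sigma_2^+$ conjugating $T^n|_\Lambda$ to the full shift. Choose a minimal weakly mixing subshift $X\subset\Sigma_2^+$ (for instance the Chacon substitution subshift), let $A_0=\pi^{-1}(X)$ and set $A=\bigcup_{j=0}^{n-1} T^j(A_0)$, which is a minimal $T$-invariant Cantor set. Applying Lemma~\ref{lem:dense_tup_LY} to the weakly mixing system $(A_0,T^n|_{A_0})$ provides a Mycielski $N\subset A_0$ whose distinct tuples are Li--Yorke for $T^n$ (hence for $T$, since $T^n$-Li--Yorke implies $T$-Li--Yorke by passing to subsequences) and satisfy $\omega(x,T)=A$. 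To get $M_2$ dense in $[0,1]$ I would augment $N$ by countably many $T$-preimages, chosen via a Kuratowski--Mycielski argument on each preimage generation to achieve density; both the Li--Yorke and $\omega$-limit properties are preserved under taking preimages. Since $M_2\cap A\supset N$ contains a Cantor set, (1) for $i=2$ and (3) hold.

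\textbf{$M_3$.} This is the subtlest. The goal is to build, inside the horseshoe $\Lambda$, a Cantor set $C$ whose points are pairwise non-Li--Yorke for $T$ (each pair either asymptotic or distal), and then enlarge $C$ by countable $T$-preimages to achieve density. To construct $C$ I would pick a Cantor family of sequences in $\Sigma_2^+$ whose images in $\Lambda\subset[0,1]$ cluster geometrically around a common fixed point of $T^n|_\Lambda$ (such as $\pi^{-1}(0^\infty)$); the crucial point is that the Euclidean metric on $\Lambda$ is not the symbolic metric pushed forward through $\pi$, so that uncountably many points can remain Euclidean-close to the fixed point along the relevant subsequences of times, even when they are symbolically far apart. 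A Kuratowski--Mycielski argument on the dense collection of $T$-preimages then yields the dense Mycielski set $M_3$.

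\textbf{Main obstacle.} The construction of $M_3$ is by far the hardest step. On any expansive subsystem (like $T^n|_\Lambda$) the asymptotic and the distal equivalence classes are countable, so no purely symbolic argument can produce an uncountable Mycielski set without Li--Yorke pairs. A successful construction must therefore exploit the embedding of $\Lambda$ in $[0,1]$ to arrange that symbolically distant points still become Euclidean-close along suitable time subsequences, through a delicate nested-interval construction that simultaneously controls the itineraries (to prevent separation) and the Euclidean geometry (to prevent proximality) of every pair of distinct points. Verifying that the resulting set is simultaneously dense, Mycielski, and free of Li--Yorke pairs is where all the delicate combinatorial and metric bookkeeping lies.
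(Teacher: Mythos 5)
Your constructions of $M_1$ and $M_2$ follow essentially the paper's route: $M_1$ comes straight from Lemma~\ref{lem:dense_tup_LY}, and $M_2$ is built from a minimal weakly mixing subshift (Chac\'on) inside the horseshoe of Lemma~\ref{lem:conj}, densified by pulling back along preimages. One caution on the densification step: ``Li--Yorke is preserved under taking preimages'' is not automatic. If two distinct points $x\neq y$ of your augmented set satisfy $T^a(x)=T^a(y)$, the pair is asymptotic and condition (1) fails; and when the preimage times differ you are really asking about time-shifted tuples $(T^{s_1}(z_1),\dots,T^{s_d}(z_d))$. This is exactly why Lemma~\ref{lem:dense_tup_LY} is stated with arbitrary shifts $s_j$, and why the paper takes pairwise disjoint Cantor sets $C_i\subset C$ and Cantor sets $D_i\subset U_i$ on which $T^{nk_i}$ is injective with $T^{nk_i}(D_i)\subset C_i$, so that distinct points of $M_2$ are sent to distinct points of $C$. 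Your sketch should be made explicit on this point, but the idea is the same.

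The genuine gap is $M_3$. Your ``main obstacle'' analysis rests on a false premise: it is not true that an expansive (symbolic) system can only have countably many pairwise non-Li--Yorke points. Sturmian subshifts are uncountable minimal subshifts of $\Sigma_2^+$ containing no Li--Yorke pairs at all (they are almost distal: every proximal pair is asymptotic), and this is precisely what the paper uses -- it embeds a Sturmian minimal set $M$ into $\Lambda$ via $\pi^{-1}$, forms $\hat M=\bigcup_{j=0}^{n-1}T^j(M)$, and densifies exactly as for $M_2$; any pair of points of $M_3$ eventually lands in $\hat M$ at a common time and hence is either asymptotic or non-LY by the Sturmian property. Moreover, the alternative mechanism you propose cannot work: since $\pi\colon\Lambda\to\Sigma_2^+$ is a homeomorphism of compact spaces, the Euclidean metric on $\Lambda$ and the pulled-back symbolic metric are uniformly equivalent, so proximality, asymptoticity and separation (hence the Li--Yorke property, though not the particular value of $\delta$) are identical in the two metrics. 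You cannot arrange for symbolically separated points to fail to separate in the Euclidean metric, nor for symbolically distant points to become Euclidean-proximal along subsequences; a construction of an uncountable LY-free Cantor set by ``clustering near a fixed point'' would force eventually constant itineraries, of which there are only countably many. So as written your $M_3$ step fails, and the missing idea is precisely the use of an uncountable minimal almost distal (Sturmian) subsystem inside the horseshoe.
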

\begin{proof}
The set $M_1$ is obtained by a direct application of Theorem~\ref{lem:dense_tup_LY}. Constructing the set $M_2$ requires a little more work.

It is known that every mixing map on the unit interval has positive topological entropy, so by Lemma~\ref{lem:conj} we can find $n>0$
and a $T^n$-invariant set $\Lambda$ where the dynamics is conjugated with the full one-sided shift $\Sigma_2^+$.
Clearly, we may assume that $\Lambda \subset (0,1)$.
Take any minimal weakly mixing subset of $\Sigma_2^+$ (\eg one-sided version of the Chac\'on flow \cite{BlaKwi})
and let us denote it by $A_n$.
Passing through conjugating homeomorphism, we may assume that $A_n\subset \Lambda$. Clearly it is a Cantor set as a perfect subset of the
Cantor set $\Sigma_2^+$ (up to conjugating homeomorphism) and also it is not hard to see that
$A=\bigcup_{j=0}^{n-1} T^j(A_n)$ is a minimal subsystem of $T$. Let us apply Theorem~\ref{lem:dense_tup_LY} to the dynamical system
$T^n|_{A_n}$, and let numbers $\delta_d>0$ and a Cantor set $C\subset A_n \subset A$ be such that
for any $d>1$, any $d$ pairwise distinct points $x_1,\ldots, x_d\in C$ and any integers $s_1,\ldots,s_d\in n\N\cup \set{0}$ the tuple $(T^{s_1}(x_1),\ldots, T^{s_d}(x_d))$ is $\delta_d$-LY for $T^n$ (clearly, it is also $\delta_d$-LY for $T$).

Since $C$ is homeomorphic to $C^2$ we can find pairwise disjoint Cantor sets $\set{C_i}_{i=0}^\infty$ such that $\cup_i C_i\subset C$.
There is also $\eps>0$ such that $C\subset (\eps,1-\eps)$. Let $\set{U_i}_{i=1}^\infty$ be a sequence composed of all open subintervals of $[0,1]$
with rational endpoints. Since $T$ is mixing, for every $i$ there is $k_i$ such that $T^{n k_i}(U_i)\cap [0,\eps]\neq \emptyset$ and $T^{n k_i}(U_i)\cap [1-\eps,1]\neq \emptyset$. It is known that if an image of a compact set via a continuous map is uncountable then there is a
Cantor set on which this map is one-to-one (see \eg \cite[Remark 4.3.6]{Siv}).
Hence, for every $i$ there is a Cantor set $D_i\subset U_i$ such that $T^{nk_i}|_{D_i}$ is one-to-one and $T^{n k_i}(D_i)\subset C_i$.
Denote $M_2 = C_0 \cup \bigcup_{i=1}^\infty D_i$. Since for every $x\in M_2$ there is $j\ge 0$ such that $T^j(x)\in A$
and $A$ is a minimal set, we immediately obtain that $\omega(x,T)=A$ for every $x\in M_2$.

Let us take any tuple $(x_1,\ldots, x_d)$ of pairwise distinct points in $M_2$. There are numbers $i_1,\ldots, i_d$
such that $x_j\in D_{i_j}$. Denote $z_j=T^{n k_{i_j}}(x_j)\in C_j$ and observe that points $z_j$ are also pairwise distinct.
Let $m=\max_{j} k_{i_j}$ and denote $s_j=m-k_{i_j}$. Now, it is enough to note that
\begin{eqnarray*}
\rho(T^{nm}(x_p), T^{nm} (x_q))&=&\rho(T^{n(s_p+k_{i_p})}(x_p), T^{n(s_q+k_{i_q})}(x_q))\\
&=&\rho(T^{ns_p}(z_p), T^{ns_q}(z_q))
\end{eqnarray*}
The construction of $M_2$ completed by the definition of the set $C$.

To construct $M_3$, let us first recall that every Sturmian minimal system does not contain Li-Yorke pairs \cite[Example~3.15]{BGKM} (it is a so-called almost distal system)
and $\Sigma_2^+$ contains an uncountable family of pairwise disjoint Sturmian systems \cite[Proposition~4.44]{Kurka}.
Repeating conjugacy argument from the previous step we may view Sturmian minimal subshift $M$ (which is a Cantor set) as a subset of $[0,1]$.
Note that since $M$ is an infinite minimal system for $T^n$ without Li-Yorke pairs, set $\hat{M}=\bigcup_{j=0}^{n-1}T^j(M)$ is also minimal, and cannot contain Li-Yorke pairs (sets $T^j(M), T^i(M)$ for $i\neq j$ are either disjoint of equal).

Proceed the same way as in the construction of $M_2$ to obtain a dense Mycielski set $M_3$ such that
every point $x\in M_3$ is eventually transformed into a point $z_x\in M$. Now, take any $x,y\in M_3$
and $k$ sufficiently large, so that $T^k(x),T^k(y)\in \hat{M}$. If $T^k(x)\neq T^k(y)$ then $(x,y)$ is not Li-Yorke pair by the definition of
$\hat{M}$ and if $T^k(x)=T^k(y)$ then $(x,y)$ is not Li-Yorke pair neither.
\end{proof}

As a direct consequence of theorem by Oxtoby \& Ulam (see \cite[Thm.\ 9]{Ulam})
we obtain that if $B\subset [0,1]$ is a dense Mycielski set then there exists a
homeomorphism $\phi \colon [0,1] \to [0,1]$ such that
$\phi(B)$ has full Lebesgue measure.

In fact, if $A,B\subset [0,1]$ are dense in $[0,1]$
and  either of them is the union of pairwise disjoint Cantor sets,
then there is an increasing
homeomorphism $\varphi\colon [0,1]\to [0,1]$ such that $\varphi (A)\subseteq B$.

Note that the dense Mycielski sets $M_1, M_2, M_3$ in Theorem~\ref{constr:m1m2} are pairwise disjoint.
In particular, if one of them has full Lebesgue measure, the other have measure zero.
As a direct consequence of Theorem~\ref{constr:m1m2} we obtain Theorem~C.


The above theorem shows that in interval dynamics Cantor attractor and no Cantor attractor cases
always coexist, but their "physical" visibility depends on the special structure of the map.
In fact, using the sets $M_1, M_2, M_3$ from Theorem~\ref{constr:m1m2}, we can distribute
Lebesgue measure in any proportion between Li-Yorke pairs made of points with dense orbits, Li-Yorke pairs with a Cantor
attractor, or without Li-Yorke pairs.

\begin{rem}
If $f\in C^3_{nf}(I)$ then by Theorem~E in \cite{vSV}, any minimal set must have zero Lebesgue measure.
Hence, the situation described in Theorem~C(\ref{anota:c4}) can never occur for these maps.
\end{rem}


\section{The Manneville-Pomeau map}\label{sec:MP}

\subsection{Inducing}
Suppose that we are in the case when every non-singular Borel measure $\mu \ll \lambda$ for $T$ is $\sigma$-finite and infinite.
Then Theorem~\ref{thm:limfull} indicates that $\lambda$ is not full,
so the results of Section~\ref{sec:dfold} are not applicable in this case.
Another approach to address the question of Li-Yorke $d$-tuples
is via inducing.
The idea is to choose an appropriate subset $Y \subset X$ and consider
the (first return) induced map $(Y, F)$,
such that $Y$ can be decomposed in
countably many subsets $\mathcal{Z} := \{ Y_j \}_{j \in \N}$ with
$\lambda(Y \setminus \cup_j Y_j) = 0$ and such that
$F(Y_j) = T^{\tau_j}(Y_j) = Y$ where the first return
time $\tau(x) = \min\{ n \ge 1 : T^n(x) \in Y\}$ is constant
$\tau_j$ of $Y_j$.
Assume that the {\em distortion} of the Jacobian
is bounded uniformly in the iterate $n$ \ie
\begin{equation}\label{eq:distortion}
\sup_{n\in\N} \ \sup_{Z \in \bigvee_{i=0}^{n-1} F^{-i} \mathcal{Z} }
\ \sup_{x,y \in Z}\  \frac{ J_{F^n}(x) }{ J_{F^n}(y) } < \infty.
\end{equation}
We take the Jacobian w.r.t.\ Lebesgue measure $\lambda$:
For any $j$, the push-forward measure $\lambda(F(A))$ for $A\subset Y_j$ is
well-defined and absolutely continuous with respect to $\lambda$.
Hence,  $J_{F} = \frac{d\mu \circ F}{d\lambda}$ is well-defined
for $\lambda$-a.e.\ $x \in Y_j$. Similarly, we can define $J_{F^n}$ on any
set $Z \in \bigvee_{i=0}^{n-1} F^{-i} \mathcal{Z}$.
By the definition $Y = \cup_j Y_j \pmod 0$ and sets $Y_j$ are in practice intervals, hence we may view each Jacobian $J_{F^n}$ as a function defined on $Y$.

In the case of bounded distortion $(Y,F)$ preserves an absolutely continuous invariant
probability measure. Let us call this measure $\nu$;
its density $\frac{d\nu}{d\lambda}$ is bounded and bounded away from $0$.
It projects to a $T$-invariant measure
\begin{equation}\label{eq:pullback}
\mu(A) = \sum_n \sum_{i=0}^{n-1} \nu(T^{-i}(A) \cap \{ y \in Y :  \tau(y) = n\}),
\end{equation}
which can be normalised if the normalising constant
$\int_Y \tau \ d\nu < \infty$, but if not, then $\mu$
is $\sigma$-finite, see \eg \cite[Chapter 6]{BG}.
The measure $\nu$ is ergodic and exact, and
and these properties carry over to $(X,T,\mu)$
(for exactness to carry over, we need the additional condition
that $gcd(\tau) = 1$). In fact, $\mu|_Y$ and $\nu|_Y$ differ by a fixed
constant, regardless whether $\int_Y \tau \ d\nu < \infty$ or not,
because $F:Y \to Y$ is the first return map, see \eqref{eq:pullback}.

If the tail $\nu(\{ y : \tau(y) > n\})$ is sufficiently heavy
(and regular), then the probability of two independently chosen initial
points to return to $Y$ at the same time infinitely
 often under iteration of $T$ can be zero.

Let us denote
$$
u_n = \lambda(\{ y \in Y : T^n(y) \in Y  \})
$$
and observe that
$$
\lambda_d(\{ y \in Y^d : T_d^n(y) \in Y^d  \})=u_n^d.
$$
Since $\frac{d\mu}{d\lambda}$ is bounded and bounded away from zero on $Y$,
we can freely interchange $\mu$ and $\lambda$ in these formulas. Therefore, if
 \begin{equation}\label{eq:square-summable}
 \sum_n u_n^d < \infty,
 \end{equation}
then the Borel-Cantelli Lemma gives
\begin{eqnarray*}
0&=&\mu_d(\bigcap_{m=1}^\infty \bigcup_{n=m}^\infty \{ y \in Y^d : T_d^n(y) \in Y^d  \})\\
&=&\mu_d(\{ x \in Y^d : T_d^n(x) \in Y^d \text{ infinitely often}\}).
\end{eqnarray*}

The following recurrence lemma seems to be standard (see \cite[Proposition 1.2.2]{Aar}).
However we could not find exact reference in the literature and hence decided to provide a proof for completeness.

\begin{lem}\label{L_Rec}
Let $\mu$ be a $\sigma$-finite, non-singular and $T$-invariant Borel measure.
If there exists $Y\in \B$ such that
$0<\mu(Y)<\infty$ and
\begin{equation}\label{Eq_DualSumsDiverge}
\sum_{n\ge0}\TF^n1_Y=\infty   \qquad \mu\text{-a.e. on } Y
\end{equation}
(where $\TF$ is the Perron-Frobenius operator), then $Y$ is a recurrent set in the sense that
\begin{equation}\label{Eq_RecurrentSet}
Y\subseteq\bigcup_{n\ge1}T^{-n}(Y) \pmod \mu.
\end{equation}
Moreover, if $\mu$ is ergodic and $T$-invariant, then $\mu$ is conservative.
\end{lem}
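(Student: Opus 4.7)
My plan is to prove the recurrence $Y \subseteq \bigcup_{n \ge 1} T^{-n}(Y) \pmod \mu$ by a disjointness-of-preimages argument using Perron--Frobenius duality, and then to derive conservativity from the first part together with ergodicity via the Hopf decomposition.

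For the main claim, set $W = Y \setminus \bigcup_{n \ge 1} T^{-n}(Y)$, the set of points in $Y$ whose forward orbit never returns to $Y$, and assume $\mu(W) > 0$ for contradiction. The crucial observation is that the preimages $\{T^{-n}(W)\}_{n \ge 0}$ are pairwise disjoint: if $x \in T^{-m}(W) \cap T^{-n}(W)$ with $0 \le m < n$, then $T^m(x) \in W$ while $T^{n-m}(T^m(x)) = T^n(x) \in W \subseteq Y$, contradicting the defining property of $W$ since $n-m \ge 1$. Applying the duality $\int_A \TF^n g \, d\mu = \int_{T^{-n}(A)} g \, d\mu$ with $A = W$ and $g = 1_Y$, and exchanging sum and integral by monotone convergence, I obtain
\begin{equation*}
\int_W \sum_{n \ge 0} \TF^n 1_Y \, d\mu \;=\; \sum_{n \ge 0} \mu\bigl(T^{-n}(W) \cap Y\bigr) \;\le\; \mu(Y) \;<\; \infty,
\end{equation*}
the inequality using the disjointness of the sets $T^{-n}(W) \cap Y$ inside $Y$. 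Hence $\sum_n \TF^n 1_Y(x) < \infty$ for $\mu$-a.e.\ $x \in W$, contradicting the standing hypothesis that this sum diverges a.e.\ on $Y \supseteq W$. Therefore $\mu(W) = 0$.

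For the ``moreover'' clause, assume $\mu$ is ergodic and invoke the Hopf decomposition $X = C \sqcup D$ into conservative and dissipative parts, both $T^{-1}$-invariant modulo $\mu$. On $D$ one has $\sum_n \TF^n 1_Y < \infty$ a.e.\ (because $\mu(Y) < \infty$ and $D$ is dissipative), so the standing divergence hypothesis forces $\mu(Y \cap D) = 0$, whence $\mu(C) \ge \mu(Y) > 0$. Ergodicity applied to the $T^{-1}$-invariant set $C$ then forces $\mu(D) = 0$, which is conservativity of $\mu$. The principal obstacle is organising the first step: the PF duality converts the dynamical non-return property of $W$ into the geometric disjointness of countably many preimages inside the finite-measure set $Y$, and this translation is precisely what produces the quantitative bound that contradicts the divergence hypothesis; the moreover step is then essentially a repackaging of the first through the Hopf dichotomy.
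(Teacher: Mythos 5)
Your proof of the recurrence statement is correct and is essentially the paper's own argument: the set $W$ of non-returning points has pairwise disjoint preimages, duality turns $\int_W\sum_n\TF^n 1_Y\,d\mu$ into $\sum_n\mu(T^{-n}(W)\cap Y)\le\mu(Y)<\infty$, and the divergence hypothesis then forces $\mu(W)=0$.

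The ``moreover'' part, however, has a genuine gap. You invoke the Hopf decomposition $X=C\sqcup D$ and assert that both parts are $T^{-1}$-invariant modulo $\mu$, so that ergodicity applied to $C$ kills $D$. For \emph{invertible} nonsingular transformations this is standard, but the transformations of interest here are non-invertible, and for a non-invertible nonsingular $T$ the Hopf decomposition only gives one inclusion, $T^{-1}(D)\subseteq D \pmod\mu$ (preimages of wandering sets are wandering), equivalently $C\subseteq T^{-1}(C)\pmod\mu$. The reverse inclusion --- that no positive-measure piece of $D$ is mapped into $C$ --- is exactly what your ergodicity step needs, and it can fail for merely nonsingular endomorphisms (picture a transient ray feeding an absorbing fixed point); it holds here only because $\mu$ is $T$-invariant, and that requires an argument you do not give. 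One way to supply it: $T$ maps $C$ into $C$ mod $\mu$, the restriction $\mu|_C$ is super-invariant for $T|_C$ ($\mu(C\cap T^{-1}A)\le\mu(A)$ for $A\subseteq C$) while $T|_C$ is conservative, and a Borel--Cantelli argument shows that super-invariance plus conservativity forces equality, whence $\mu(D\cap T^{-1}(C))=0$. Without this (or a citation valid for measure-preserving endomorphisms), ``ergodicity applied to the $T^{-1}$-invariant set $C$'' is unjustified; the inclusion you do have only shows that $\bigcup_{n\ge0}T^{-n}(C)$ is invariant and co-null, which does not by itself give $\mu(D)=0$. The paper avoids the issue altogether: it uses the first part to show that $E=\bigcup_{n\ge0}T^{-n}(Y)$ is invariant mod $\mu$, concludes from ergodicity that $Y$ sweeps out $X$, and then quotes Maharam's Recurrence Theorem, which encapsulates precisely the fact your argument is missing. (Your auxiliary claim that $\sum_n\TF^n 1_Y<\infty$ a.e.\ on $D$ is fine; it is the same wandering-set computation as in the first part.)
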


\begin{proof}
First, we need to show that the set $A:=Y\setminus\bigcup_{n\ge 1} T^{-n}(Y)$ of points which never
return to $Y$ has measure zero. We must have $A\cap T^{-j}(A)=\emptyset$
for every $j \ge 1$ because $A\subseteq Y$.
This immediately implies that $A$ is a wandering set for $T$, that is, the preimages
$T^{-n}(A)$, $n\ge0$, are pairwise disjoint.  Directly from the definition of Perron-Frobenius operator $\TF$
we have
$\int_{A}\TF^{n}1_{Y}\,d\mu=\mu(Y\cap T^{-n}(A))$, hence
$$
\int_{A}\sum_{n \ge 0} \TF^n1_{Y}\,d\mu=\sum_{n \ge 0}\mu(Y\cap
T^{-n}(A))\le \mu(Y)<\infty,
$$
By assumption \eqref{Eq_DualSumsDiverge} we obtain that $\mu(A)=0$ which ends the proof of
\eqref{Eq_RecurrentSet}.

As a consequence of \eqref{Eq_RecurrentSet} we see that the set $E:=\bigcup_{n\ge
0}T^{-n}(Y)$ satisfies $T^{-1}(E)=E \pmod \mu$. Since
$Y\subset E$, we have $\mu(E)>0$. If $T$ is ergodic, then we can conclude that
$X=\bigcup_{n\ge0}T^{-n}(Y) \pmod \mu$. This, since $T$ is measure preserving transformation, implies that assumptions of Maharam's
Recurrence Theorem are satisfied (see \eg \cite[Thm 1.1.7]{Aar}) which thus ensures that $T$ is conservative.
\end{proof}

\begin{prop}\label{prop:cons}
Let us assume that $T$ preserves a non-singular exact Borel probability measure $\mu$ equivalent to $\lambda$.
If there exists a set $Y \subset X$ recurrent in the sense of \eqref{Eq_RecurrentSet} such that $\mu(Y)>0$
and the first return map $(Y,F)$ has only onto branches
with bounded distortion (in the sense of \eqref{eq:distortion})
then the following conditions are equivalent for every integer $d\ge 1$:
\begin{enumerate}
\item\label{prop:cons:1} $\sum_n u_n^d = \infty$, where $u_n = \lambda(\{ y \in Y : T^n(y) \in Y\})$,
\item\label{prop:cons:2} $d$-fold product measure $\lambda_d$ is
ergodic and conservative.
\end{enumerate}
\end{prop}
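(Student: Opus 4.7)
The plan is to apply Lemma~\ref{L_Rec} to the product system $(X^d, T_d, \mu_d)$, using that $\mu_d$ is non-singular, $\sigma$-finite and $T_d$-invariant, and ergodic by Lemma~\ref{L_Erg} (since $\mu$ is exact). Because $\lambda_d$ and $\mu_d$ are mutually absolutely continuous on $X^d$ (both have as density a product of Radon-Nikodym derivatives that are bounded away from $0$ and $\infty$ on $Y$), ergodicity and conservativity for $\lambda_d$ reduce to the same questions for $\mu_d$, so throughout I work with $\mu_d$.

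For \ref{prop:cons:2}$\Rightarrow$\ref{prop:cons:1}, I argue by contrapositive. Assume $\sum_n u_n^d < \infty$. Since the density $d\mu/d\lambda$ is bounded above and below on $Y$ (a consequence of bounded distortion together with $\mu|_Y$ being a constant multiple of $\nu$), we have $\mu_d(\set{\x\in Y^d : T_d^n(\x)\in Y^d}) \le C u_n^d$ for some constant $C$, so the Borel-Cantelli lemma yields that $\mu_d$-a.e.\ $\x\in Y^d$ visits $Y^d$ only finitely often. But then $Y^d$ is a positive-measure set for which the recurrence conclusion of Remark~\ref{rem:conservative} fails, so $\mu_d$ is not conservative.

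For \ref{prop:cons:1}$\Rightarrow$\ref{prop:cons:2}, I assume $\sum_n u_n^d = \infty$ and verify the hypothesis \eqref{Eq_DualSumsDiverge} of Lemma~\ref{L_Rec} for $(T_d,\mu_d,Y^d)$. The Perron-Frobenius operator of the product factorises on product functions: writing $\TF_d$ for this operator, a direct check from the defining identity gives $\TF_d^n 1_{Y^d}(\x) = \prod_{i=1}^d \TF^n 1_Y(x_i)$. The task is therefore to establish a uniform comparison $\TF^n 1_Y(x)\asymp u_n/\lambda(Y)$ for $x\in Y$, with constants independent of $n$. A preimage of $x\in Y$ under $T^n$ that lies in $Y$ corresponds to a unique cylinder $Z\in \bigvee_{i=0}^{k-1} F^{-i}\mathcal{Z}$ whose total first-return time to $Y$ under $F$ equals $n$; the onto-branch property ensures each such $Z$ contains exactly one such preimage $y_Z(x)$, while the bounded distortion assumption \eqref{eq:distortion} gives $|J_{T^n}(y_Z(x))|^{-1}\asymp \lambda(Z)/\lambda(Y)$ uniformly in $x$ and $Z$. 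Summing over all eligible cylinders and using $u_n = \sum_{k,Z}\lambda(Z)$ produces the claimed two-sided bound.

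Combining these ingredients, $\TF_d^n 1_{Y^d}(\x) \ge c\,(u_n/\lambda(Y))^d$ on $Y^d$ for some $c>0$, so $\sum_n \TF_d^n 1_{Y^d} = \infty$ $\mu_d$-a.e.\ on $Y^d$, which is exactly \eqref{Eq_DualSumsDiverge}. Lemma~\ref{L_Rec} then yields that $Y^d$ is recurrent for $T_d$, and its second assertion, combined with the already established ergodicity of $\mu_d$, gives conservativity of $\mu_d$, hence of $\lambda_d$. The main technical obstacle is the uniform comparability $\TF^n 1_Y \asymp u_n/\lambda(Y)$ on $Y$; while it is a standard renewal-type computation in the inducing framework, it is the only place where the structural hypotheses on $F$ are essentially used, and carefully tracking the distortion constants across cylinders of arbitrary depth in the induced partition is the substantive input of the argument.
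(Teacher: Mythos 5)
Your proposal is correct, and it shares the paper's overall skeleton (Borel--Cantelli plus Remark~\ref{rem:conservative} for \eqref{prop:cons:2}$\Rightarrow$\eqref{prop:cons:1}; Lemma~\ref{L_Erg} for ergodicity and Lemma~\ref{L_Rec} for conservativity), but the way you verify the divergence condition \eqref{Eq_DualSumsDiverge} is genuinely different from the paper's. The paper never estimates $\TF^n 1_Y$ pointwise: it compares the partial sums $\sum_{k\le n}\int_A \TF_d^k 1_{Y^d}\,d\lambda_d$ with the corresponding sums for the induced operator $\FF_d$ (using that $T_d$-visits of $Y^d$ to a set $A\subset Y^d$ and $F_d$-returns enumerate the same events), and combines this with a distortion-based comparability of $\sum_{k\le n}\FF_d^k 1_{Y^d}$ across points of $Y^d$ to run a contradiction argument: a positive-measure set on which $\sum_k \TF_d^k 1_{Y^d}$ stays bounded would force $\sum_k u_k^d<\infty$. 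You instead prove the uniform two-sided renewal bound $\TF^n 1_Y \asymp u_n/\lambda(Y)$ on $Y$ by organising the $Y$-preimages of $x$ under $T^n$ into cylinders of the induced partition with total return time $n$, using the onto-branch property (exactly one preimage per eligible cylinder) and \eqref{eq:distortion} to get $1/J_{F^k}\asymp \lambda(Z)/\lambda(Y)$ on each such $Z$; the product factorisation of the transfer operator then gives divergence of $\sum_n\TF_d^n 1_{Y^d}$ term by term. Your route yields a stronger intermediate statement (pointwise comparability with $u_n$, a crude analogue of the asymptotics quoted from Gou\"ezel later) and is quite transparent, while the paper's route works only with integrated quantities and so never needs to index preimages by cylinders of prescribed total return time. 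Two small points you should make explicit, though neither is a gap beyond what the paper itself leaves implicit: (i) Lemma~\ref{L_Rec} concerns the Perron--Frobenius operator dual to the invariant measure $\mu_d$, whereas your estimate is for the $\lambda_d$-operator; since $d\mu/d\lambda$ is bounded above and away from zero on $Y$, the two operators applied to $1_{Y^d}$ are comparable on $Y^d$, so the divergence transfers; (ii) Lemma~\ref{L_Erg} is stated for an exact $\lambda$, and exactness of $\mu$ passes to the equivalent measure $\lambda$ because triviality of the tail field depends only on the measure class.
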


\begin{proof}
To prove $\eqref{prop:cons:2}\Longrightarrow\eqref{prop:cons:1}$ let us first observe, that if $\lambda_d$ is conservative then
\begin{eqnarray*}
0<\lambda_d(Y^d)&=&\lambda_d(\{ x \in Y^d : T_d^n(x) \in Y^d \text{ infinitely often}\})\\
&=&\lambda_d(\bigcap_{m=1}^\infty \bigcup_{n=m}^\infty \{ y \in Y^d : T_d^n(y) \in Y^d  \})\\
\end{eqnarray*}
and hence by the Borel-Cantelli Lemma (see \eqref{eq:square-summable}) we must have $\sum_n u_n^d = \infty$.

To prove $\eqref{prop:cons:1}\Longrightarrow\eqref{prop:cons:2}$,
denote by $\TF$ and $\FF$ the Perron-Frobenius operator for $T$ and $F$, respectively, both w.r.t.\ $\lambda$.
Then for every $n>0$ and a.e.\ $x\in Y$ we have
\begin{equation}
\FF^n f(x) =\sum_{y\in F^{-n}(x)} \frac{f(y)}{J_{F^n}(y)},\label{density:form}
\end{equation}
where $J_{F^n}$ is the Jacobian w.r.t.\ $\lambda$.
The bounded distortion of $J_{F^n}$ applied to \eqref{density:form} allows us to verify that there is
a constant $\kappa>0$ such that for every measurable $A \subset X$ and  $n>0$ we have
\begin{equation*}\label{Eq_BasicEstimate}
\inf_{x\in A}\sum_{k=0}^n \FF^k1_{Y}(x)\ge\kappa\cdot\sup_{x\in A}\sum_{k=0}^n \FF^k 1_Y(x).
\end{equation*}
A similar estimate for the $d$-fold product system with Perron-Frobenius
operator $\FF_d$,
\begin{equation*}
\inf_{\x\in A}\sum_{k=0}^n\FF_{d}^{k}1_{Y^d}(\x)\ge\kappa^{d}\cdot
\sup_{\x\in A}\sum_{k=0}^n \FF_{d}^{k}1_{Y^d}(\x).
\end{equation*}
Hence, if there is a set $A\subset Y$ of positive measure such that $\sum_{k=0}^n \FF_{d}^{k}1_{Y^d}(\x)$
is uniformly bounded for all $n \ge 0$
and $\x \in A$, then $\sum_{k=0}^n \FF_{d}^{k}1_{Y^d}(\x)$ is uniformly bounded for all $n \ge 0$
and $\lambda$-a.e.\ $\y\in Y$.

Observe that for any Borel set $A\subset Y^d$ we have
$$
\sum_{k=0}^n\int_{A}  \TF_{d}^{k}1_{Y^{d}}\ d\lambda_d\le
\sum_{k=0}^n\int_{A} \FF_{d}^{k}1_{Y^{d}}\ d\lambda_d
\le \sum_{k=0}^\infty \int_{A}  \TF_{d}^{k}1_{Y^{d}}\ d\lambda_d.
$$
If there exists a set $A\subset Y^d$ with $\lambda_d(A)>0$ and $M>0$ such that
$\sum_{k\ge 0}\TF_{d}^{k}1_{Y^{d}}(\x)<M$ for every $\x\in A$ then there is $c>0$ such that
$$
M\ge \sum_{k\geq 0}\int_{A} \FF_{d}^{k}1_{Y^{d}}\ d\lambda_d
\ge c \sum_{k\geq 0}\int_{Y^d} \FF_{d}^{k}1_{Y^{d}}\ d\lambda_d
\ge c \sum_{k\geq 0}\int_{Y^d} \TF_{d}^{k}1_{Y^{d}}\ d\lambda_d.
$$
Consequently, divergence of
$\sum_{k\ge0}u_{k}^{d}=\sum_{k\ge0}\int_{Y^d} \TF_{d}^k 1_{Y^d}\,d\lambda_d$
implies that
$$
\sum_{k\ge 0}\TF_{d}^{k}1_{Y^{d}}=\infty \qquad \lambda\text{-a.e. on } Y^{d}.
$$
By Lemma~\ref{L_Erg}, $d$-dimensional direct product measure $\lambda_d$, is ergodic with
respect to $T_d$ for every $d\ge 1$.
Recall that $\mu_d$ is a $T_d$-invariant, non-singular and $\sigma$-finite measure. But $\lambda_d$ is ergodic and $\mu_d$ is equivalent to $\lambda_d$, hence
$\mu_d$ is ergodic. Applying Lemma~\ref{L_Rec} to $\mu_d$ and $T_d$,
we obtain that $\mu_d$ is conservative, and
using once again equivalence of $\mu_d$ and $\lambda_d$ we conclude that $\lambda_d$ is conservative, which completes the proof.
\end{proof}

\subsection{Manneville-Pomeau maps}
The classical example from interval dynamics where the $u_n$ can be
computed is the Manneville-Pomeau family, where Lebesgue measure $\lambda$ will be our reference measure for $\mu$.
These are interval maps with a neutral fixed point and the inducing is with respect to a set $Y$ bounded away from this fixed point.
For us, it is convenient to use the
family $T_\alpha:[0,1] \to [0,1]$ defined by
$$
T_\alpha(x) = \begin{cases}
x(1+2^\alpha x^\alpha) & \text{ if } x \in [0,\frac12), \\
2x-1 & \text{ if } x \in [\frac12,1] =: Y. \\
\end{cases}
$$
It has an indifferent fixed point at $0$, and
the first return map $F:Y \to Y$ is uniformly expanding
with bounded distortion (uniformly in all iterates, in the sense
of \eqref{eq:distortion}, see \eg \cite{luzzatto}),
$F:Y \to Y$ preserves an ergodic measure which pull back to an
ergodic and conservative $T_\alpha$-invariant measure
$\mu$, see Theorem~1 and Corollary~2 in \cite{ThaEstim}.

\begin{rem}\label{rem:Thaler}
More precise estimates on the invariant density $h$ were given by
Thaler \cite[Corollary~1]{ThaEstim}), who showed that
$h(x) x^{\alpha}$ is bounded and bounded away from $0$.
In addition (see \cite[Lemma~2.3]{LSV}) for $\alpha\in (0,1)$ the density $h = \frac{d\mu}{d\lambda}$
is Lipschitz outside a neighbourhood of $0$ and $\mu$ is mixing (see \eg \cite[Theorem~7]{luzzatto}).
\end{rem}

If $y_0 = \frac12$ and $y_{k+1}$ is the unique point in $T_\alpha^{-1}(y_k) \cap [0,y_n]$, then $y_n \searrow 0$ such that
\begin{equation*}\label{eq:yk}
y_n \sim  n^{-\beta} \qquad \text{ for } \beta = \frac{1}{\alpha},
\end{equation*}
where $a_n \sim b_n$ stands for $\lim_n a_n/b_n \in (0, \infty)$, see \cite{dB}.
If $y'_{n+1} \in [\frac12, 1]$ is the other preimage of $y_n$,
then $[\frac12, y'_{n+1}) = \{ y \in Y : \tau(y) \ge n+2\}$.
By Remark~\ref{rem:Thaler},
$\mu(\{ y \in Y : \tau(y) \ge n \}) \sim |y'_{n-2}-\frac12|
= \frac12y_{n-1} \sim n^{-\beta}$ and therefore
\begin{eqnarray}\label{eq:infinity}
\int \tau d\mu &=& \sum_n n \mu(\{ y \in Y : \tau(y) = n \}) \nonumber \\[2mm]
&=& \sum_n \mu(\{ y \in Y : \tau(y) \ge n \})
\begin{cases}
< \infty & \text{ if } \alpha \in (0,1);\\
= \infty  & \text{ if } \alpha \ge 1.
\end{cases}
\end{eqnarray}

\begin{thm}\label{thm:asymptotics}
Assume that $\alpha > 1$ and write $\beta = 1/\alpha \in (0,1)$.
Then $u_n \sim n^{\beta-1}$.
In particular,
$\sum_n u_n^d = \infty$ if and only if $\alpha \le \frac{d}{d-1}$.
\end{thm}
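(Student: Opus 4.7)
The plan is to identify $u_n$ as (a bounded multiple of) a renewal sequence associated with the first-return map $F$, and then feed this into a classical heavy-tail renewal theorem.

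First, denote by $\tau$ the first-return time to $Y$, set $S_k=\tau+\tau\circ F+\dots+\tau\circ F^{k-1}$ (the time of the $k$-th return of a point in $Y$), and decompose
$$
Y\cap T_\alpha^{-n}(Y)\;=\;\bigsqcup_{k\ge 1}\{y\in Y:\ S_k(y)=n\}.
$$
By the bounded distortion of $F$ the density $d\nu/d\lambda$ is bounded above and below on $Y$, so $u_n$ is comparable to the renewal mass $r_n:=\sum_{k\ge 1}\nu(\{S_k=n\})$. The estimates preceding the theorem give $\nu(\tau\ge n)\sim c\,n^{-\beta}$, so $\tau$ has regularly varying tail of index $\beta\in(0,1)$ and infinite mean (cf.~\eqref{eq:infinity}).

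The main analytic step, and the principal obstacle, is passing from this tail behaviour to the renewal asymptotics $r_n\sim c'\,n^{\beta-1}$. If the increments $\tau\circ F^i$ were i.i.d.\ this would be the Garsia--Lamperti renewal theorem (valid for every $\beta\in(0,1)$ once $\nu(\tau=n)$ is eventually monotone, which is clear from the explicit formula $\nu(\tau\ge n)\sim \tfrac12 y_{n-1}$). Under $\nu$ the increments are only stationary, but $F$ is a piecewise-expanding Markov map of $Y$ with bounded distortion, so its transfer operator has a spectral gap on a suitable BV/Lipschitz space (cf.~\cite{luzzatto}). This places the problem squarely inside Sarig's operator renewal framework (and its sharpenings by Gou\"ezel and Melbourne--Terhesiu), which yields $r_n\sim c'\,n^{\beta-1}$ with the same constant as in the i.i.d.\ case. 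A parallel route goes through Thaler's dual estimate $\TF^n 1_Y \sim c''\,n^{\beta-1} h$ on $Y$ from \cite{ThaEstim}: integrating against $1_Y$ gives $u_n=\int_Y \TF^n 1_Y\,d\lambda \sim c'''\,n^{\beta-1}$.

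Combining the two steps yields $u_n\sim n^{\beta-1}$. For the final equivalence, $\sum_n u_n^d \asymp \sum_n n^{d(\beta-1)}$, which diverges iff $d(1-\beta)\le 1$, i.e.\ iff $\beta\ge (d-1)/d$, i.e.\ iff $\alpha=1/\beta\le d/(d-1)$, as claimed.
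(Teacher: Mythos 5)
Your proposal follows essentially the same route as the paper: after the tail estimate $\mu(\{\tau=n\})\sim n^{-(1+\beta)}$, the paper likewise delegates the hard renewal step to operator renewal theory, citing \cite[Proposition 1.7]{Gouezel} with $u=v=1_Y$ to obtain $u_n\sim n^{\beta-1}$, and then concludes $\sum_n u_n^d=\infty$ iff $\alpha\le\frac{d}{d-1}$ exactly as you do. The only quibbles are cosmetic: the dual-operator asymptotics $\TF^n 1_Y\sim c\, n^{\beta-1}h$ is not contained in \cite{ThaEstim} (which only estimates the invariant density), and for $\beta\le\frac12$ the i.i.d.\ strong renewal theorem needs the local asymptotics of $\nu(\tau=n)$ (Doney/Gou\"ezel) rather than mere eventual monotonicity, but neither point affects your main argument.
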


\begin{proof}
From the above considerations, we have
for the Manneville-Pomeau map that
$$
\mu(x \in Y : \tau(x) = n) \sim
\lambda([y_n, y_{n-1}])
\sim (n-1)^{-\beta} - n^{-\beta} \sim n^{-(1+\beta)}.
$$
Then the estimate on $u_n$ is a special case of the results of
Gou\"ezel \cite{Gouezel},
partly correcting results from Doney \cite{D} (see \cite[Section 1.3]{Gouezel}).
More precisely, \cite[Proposition 1.7]{Gouezel} applied to $u = v = 1_Y$
gives $u_n = \int_Y u \cdot v \circ T^n d\mu \sim n^{\beta-1}$.
Now it follows immediately that
$\sum_n u_n^d = \infty$ if and only if $\alpha \le \frac{d}{d-1}$.
\end{proof}


\begin{proof}[Proof of Theorem~B]
Recall that by Remark~\ref{rem:Thaler} for $\alpha < 1$
the map $T_\alpha$ preserves a mixing absolutely continuous
probability measure,
and hence the result follows by Lemma~\ref{lem:weak_mixing} (see also Remark~\ref{rem:delta_2}).

If $\alpha \ge 1$, then there is an infinite $\sigma$-finite
measure $\mu \sim \lambda$ (by Remark~\ref{rem:Thaler}, the density of $\mu$ is bounded and bounded away from $0$).
More precisely, using Remark~\ref{rem:Thaler} again,
$\mu([y_k, 1]) < \infty$ for fixed $k$,
but $\mu([y_l,y_k]) \to \infty$ as $l \to \infty$.
This means that given a neighbourhood $U$ of the fixed point $0$,
Lebesgue-a.e.\ point $x$ spends
almost every iterate in $U$  (\ie $N(x,U) := \set{n \ge 0: T^n(x)\in U}$.
has density $1$ for $\nu$-a.e. $x$).
Indeed, for $U = [0,y_k)$, we have
\begin{eqnarray*}
\lim_{n\to\infty} \frac1n \#\{ 0 \le i < n : T^i(x) \in [y_k, 1]\}
&\le& \lim_{n\to\infty} \frac{ \#\{ 0 \le i < n : T^i(x) \in [y_k, 1]\} }
{ \#\{ 0 \le i < n : T^i(x) \in [y_l, 1]\} } \\
&=& \frac{\mu([y_k,1])}{\mu([y_l,1])} \to 0 \text{ as } l \to \infty,
\end{eqnarray*}
by the Ratio Ergodic Theorem (see \eg \cite[Theorem 2.2.5.]{Aar}).
It follows that $\lambda_d$-a.e.\ $d$-tuple is proximal along a
subsequence.

Next, we claim that every pair is LY.
This follows easily from the fact that $T_\alpha$ is expanding
away from $0$. More precisely,
for every two $x,y \in [0,1]$ which are
not eventually mapped to one another,
$\rho(T_\alpha^i(x), T_\alpha^i(y)) \le \frac13$ implies that
$\rho(T_\alpha^{i+1}(x), T_\alpha^{i+1}(y)) > \rho(T_\alpha^i(x), T_\alpha^i(y))$.
Therefore $(x,y)$ is $\frac13$-separated.

Now let $d \ge 3$,  $1 \le \alpha < \frac{d-1}{d-2}$ and choose
$\delta < 1/2(d-2)$.
By Theorem~\ref{thm:asymptotics} applied to $d-1$ coordinates,
$\sum_n u_n^{d-1} = \infty$ and hence,
$([0,1]^{d-1}, \lambda_{d-1}, T_{d-1})$ is conservative by
Proposition~\ref{prop:cons}.
Therefore, taking $A \subset Y^{d-1}$ with $\mu_{d-1}(A) > 0$,
for $\mu_{d-1}$-a.e.\ $\underline x$, there is a sequence $(t_n)$ such that
$(T_{d-1})^{t_n}(x_i) \in A$ for all $n \in \N$.
The set
$$
A = [\frac12, \frac12+\eta] \times [\frac12+\frac1{2(d-2)},
\frac12+\frac1{2(d-2)}+\eta] \times \dots \times [1-\eta, 1]
$$
has positive $\lambda_{d-1}$-measure, and clearly
points $\underline y \in A$ are $\frac{1}{2(d-2)}-2\eta$-separated
$d-1$-tuples.
Choosing $\eta$ so small that $\frac{1}{2(d-2)}-2\eta > \delta$ then shows that
the $d-1$-tuple $\x$ is $\delta$-separated along a subsequence
$(t_n)$.

We claim that the remaining coordinate $x_d$ is close to $0$, or
more precisely: for $\lambda$-a.e.\ $x_d$
we have $T_\alpha^{t_n}(x_d) < \frac12-\delta$ infinitely often.
Indeed, take $U_N := \{ x_d \in [0,1] :
T_\alpha^{t_n}(x_d) \ge \frac12-\delta \text{ for all } n \ge N\}$
and let $x_d^* \in U_N$ be arbitrary.
Let $H_n \owns x_d^*$ be the maximal interval such that $T_\alpha^{t_n}(H_n)
= [\frac12(\frac12-\delta),1]$ and $H'_n \subset H_n$ is such
that  $T_\alpha^{t_n}(H'_n) =  [\frac12(\frac12-\delta), \frac12-\delta)$.
Since $t_n$ is increasing, we clearly have $\lim_{n\to\infty}|H_n|=0$.
The maps $T_\alpha^{t_n}:H_n \to [\frac12(\frac12-\delta),1]$
have uniformly bounded distortion (this is proved in virtually the same way
as the distortion bound for the branches of $F^k$ is proven,
cf.\ \cite{luzzatto}).
Therefore there is $K > 0$ such that $|H'_n| > |H_n|/K$ for all
$n \in \N$. Since $U_N \cap H'_n = \emptyset$ for $n \ge N$,
it follows that $x_d^*$ cannot be a Lebesgue density point of $U_N$.
This means that $\lambda(U_N) = 0$ and hence
$\lambda(\cup_N U_N) = 0$ as well, proving the claim.
Using Fubini's Theorem, we conclude that $\lambda_d$-a.e.\ $d$-tuple is indeed
$\delta$-separated along a subsequence.

Finally, if $\alpha > \frac{d-1}{d-2}$, typical $d-1$-tuples
$\x = (x_1, \dots x_{d-1})$
visit $Y$ simultaneously only finitely often.
Now let $\eps = y_k$ and assume by contradiction that
$\x$ visits $[\eps, 1]^{d-1}$ infinitely often
for a set of $\x$ of positive $\lambda_{d-1}$-measure.
Hence, there are integers $a_1, \dots a_{d-1} \in \{ 0, 1, \dots, k\}$
such that $T^{n+a_1}(x_1), \dots, T^{n+a_{d-1}}(x_{d-1}) \in Y$
for infinitely many $n$, still for a set $U$ of positive $\lambda_{d-1}$-measure.
Take $a = \max_i a_i$ and for each $\x \in U$
take a $d-1$-tuple $\y$ with coordinates
$y_i \in T^{a_i-a}(x_i) \cap Y$. Since $T$ is non-singular (and the Cartesian
product $\prod_{i=1}^{d-1} T^{a-a_i}$ is non-singular too), we have
$\lambda_{d-1}(\{ \y : \x \in U\}) > 0$.
Now for every $\y$,
there is an infinite sequence $(n_k)_{k \in \N}$ (depending on $\underline y$ but
not on the index $i = 1,\dots, d-1$), such that
$T^{n_k}(y_i) = T^{n_k+a_i-a}(x_i) \in Y$
for each $i$ and $k$.
This contradicts the first statement of this paragraph.

Coming back to a typical $d$-tuple $(x_1, \dots, x_d)$,
the above argument shows that no matter
how we select a $d-1$-tuple $\x$ from it, for all sufficiently
large $n$, at least one coordinate $T^n(x_i) \in [0,\eps)$.
Therefore at least two coordinates of the $d$-tuple belong to $[0,\eps)$.
Since $\eps > 0$ can be taken arbitrary small, the proof is complete.
\end{proof}

\begin{rem}
We can replace
the right branch of the Manneville-Pomeau map by $2(1-x)$ if a continuous map is
preferred. The dynamical properties that we are concerned with
remain the same. The same is true, if we view Manneville-Pomeau map as a continuous map on the unit circle.
\end{rem}

\begin{rem}
One can increase the number of neutral fixed point, \eg define
the map
$$
T_{\alpha, \beta}(x) = \begin{cases}
x+2^\alpha x^{1+\alpha} & \text{ if } x \in [0,\frac12), \\
x-2^\beta(1-x)^{1+\beta} & \text{ if } x \in [\frac12,1], \\
\end{cases}
$$
see Figure~\ref{fig:linear_chain2},
and consider the Li-Yorke behavior of tuples for this map.
If $\alpha > \beta > 1$, then neutral fixed point $0$ dominates, and we expect
the same behaviour as in
Theorem~B.
For the case $\alpha = \beta$, we expect that
$\lambda_3$-a.e.\ $3$-tuple is Li-Yorke
(where for typical triples $(x_1, x_2, x_3)$, there are infinitely
many $n$ with $T^n(x_1) \approx 0$, $T^n(x_2) \approx 1$
 and $T^n(x_3) \in [\frac13, \frac23]$, as well as
are infinitely many $m$ with $T^m(x_1) \approx T^m(x_2) \approx T^m(x_3) \approx 0$.
Conjecturally, for $d \ge 4$, typical $d$-tuples are Li-Yorke
if and only if $\alpha \le \frac{d-2}{d-3}$.
\end{rem}

\begin{figure}[htb]
\begin{center}
\includegraphics[width=0.3\textwidth]{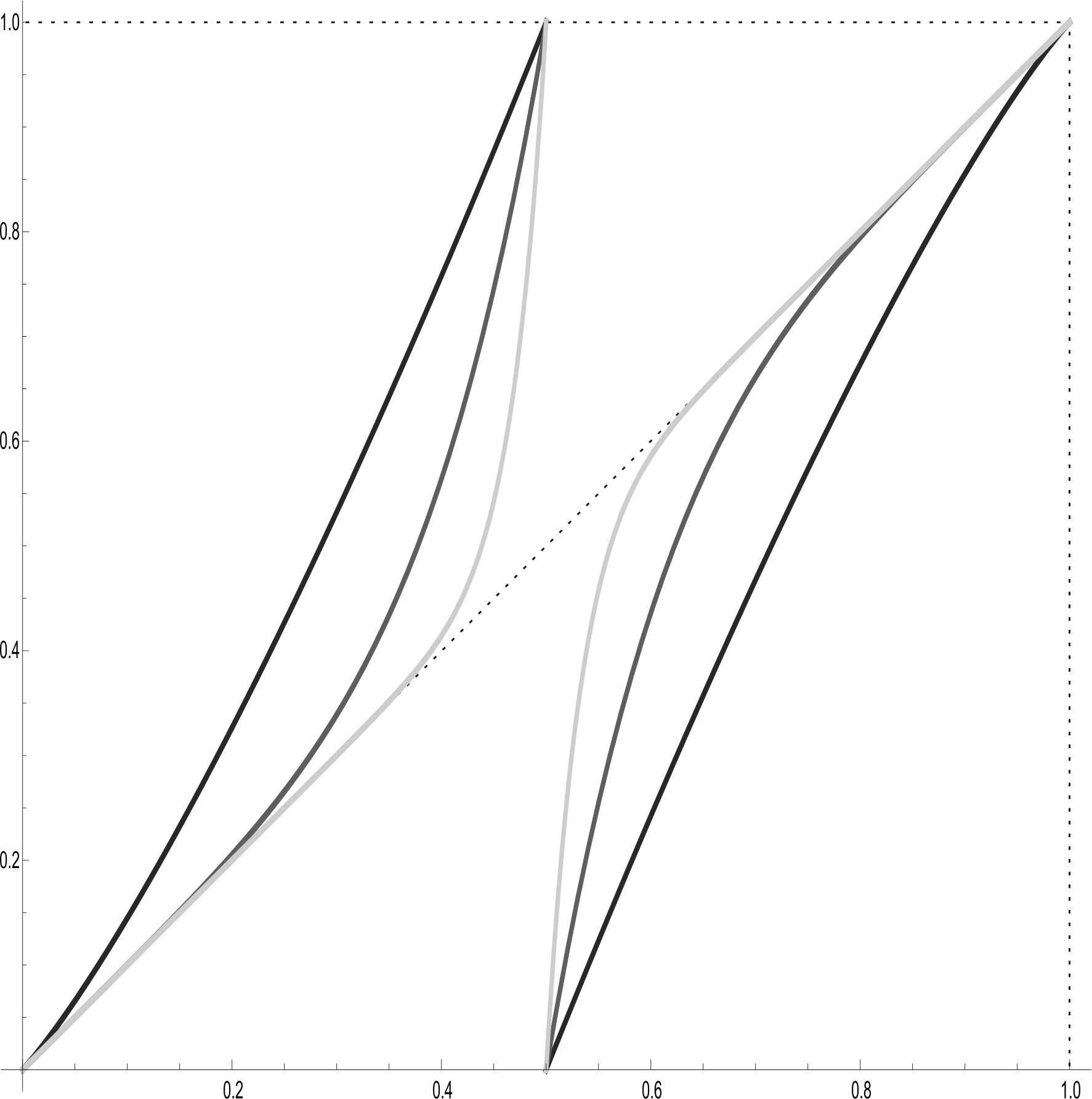}
\caption{Graph of $T_{\alpha,\beta}$ for $\alpha = \beta = \frac12, 4, 15$.}
\label{fig:linear_chain2}
\end{center}
\end{figure}

\section*{Acknowledgements}
The first author wants to thank Dalia Terhesiu and Roland Zweim\"uller for sharing their expertise on the infinite measure preserving
examples in this paper.
Also the support of OeAD 
(Project Number: PL 02/2013) and
MNiSW (Project Number: AT 2/2013-15), as well as the hospitality of the Max Planck Institute for Mathematics in Bonn,
are gratefully acknowledged.

\end{document}